\providecommand{\tabularnewline}{\\}
\numberwithin{equation}{section}
  \theoremstyle{plain}
  \newtheorem*{conjecture*}{\protect\conjecturename}
  \theoremstyle{plain}
  \newtheorem*{prop*}{\protect\propositionname}
\theoremstyle{plain}
\newtheorem{thm}{\protect\theoremname}[section]
  \theoremstyle{plain}
  \newtheorem{lem}[thm]{\protect\lemmaname}
  \theoremstyle{definition}
  \newtheorem{defn}[thm]{\protect\definitionname}
  \theoremstyle{remark}
  \newtheorem{rem}[thm]{\protect\remarkname}
  \theoremstyle{plain}
  \newtheorem{prop}[thm]{\protect\propositionname}
 \theoremstyle{definition}
 \newtheorem*{defn*}{\protect\definitionname}
  \theoremstyle{plain}
  \newtheorem{cor}[thm]{\protect\corollaryname}
  \theoremstyle{plain}
  \newtheorem*{lem*}{\protect\lemmaname}
  \theoremstyle{plain}
  \newtheorem{conjecture}[thm]{\protect\conjecturename}
\newcommand{\FigBesBeg}[1][1.0]{%
 \let\MyFigure\figure
 \let\MyEndfigure\endfigure
 \renewenvironment{figure}[1]{\begin{SCfigure}[#1]##1}{\end{SCfigure}}}
\newcommand{\FigBesEnd}{%
 \let\figure\MyFigure
 \let\endfigure\MyEndfigure}
\newcommand{\Xcov}{{\scriptscriptstyle \overset{\twoheadrightarrow}{X}}}
\newcommand{\covers}{\leq_{\Xcov}}
\newcommand{\filleddiamond}{\raisebox{0.17\height}{\scalebox{0.9}[0.6]{$\blacklozenge$}}}
  \providecommand{\conjecturename}{Conjecture}
  \providecommand{\corollaryname}{Corollary}
  \providecommand{\definitionname}{Definition}
  \providecommand{\lemmaname}{Lemma}
  \providecommand{\propositionname}{Proposition}
  \providecommand{\remarkname}{Remark}
\providecommand{\theoremname}{Theorem}
\begin{document}

\title{Stallings Graphs, Algebraic Extensions\\
and Primitive Elements in $\mathbf{F}_{2}$}

\author{Ori Parzanchevski%
\thanks{Supported by an Advanced ERC Grant.%
} and Doron Puder%
\thanks{Supported by the Adams Fellowship Program of the Israel Academy of
Sciences and Humanities.%
}}
\maketitle
\begin{abstract}
This paper studies the free group of rank two from the point of view
of Stallings core graphs. The first half of the paper examines primitive
elements in this group, giving new and self-contained proofs for various
known results about them. In particular, this includes the classification
of bases of this group. The second half of the paper is devoted to
constructing a counterexample to a conjecture by Miasnikov, Ventura
and Weil, which seeks to characterize algebraic extensions in free
groups in terms of Stallings graphs.
\end{abstract}

\section{Introduction}

Let $\mathbf{F}$ be a finitely generated free group. A subgroup $J$
of $\mathbf{F}$ is said to be an \emph{algebraic extension} of another
subgroup $H$, if $H\leq J$ and there does not exist an intermediate
subgroup $H\leq M\lneq J$ such that $M$ is a proper free factor
of $J$. We denote this by $H\leq_{alg}J$\marginpar{$H\leq_{alg}J$}.
This notion, which was formulated independently by several authors
(and already appears in \cite{takahasi1951note}), is central to the
understanding of the lattice of subgroups of $\mathbf{F}$. For example,
it can be shown that every extension $H\leq J$ of free groups admits
a unique intermediate subgroup $H\leq_{alg}M\leq_{ff}J$ (where $\leq_{ff}$
denotes a free factor). Moreover, if $H\leq\mathbf{F}$ is a finitely
generated subgroup, it has only finitely many algebraic extensions
in $\mathbf{F}$. Thus, every group containing $H$ is a free extension
of one of the algebraic extensions of $H$, which is a well known
theorem of Takahasi \cite{takahasi1951note}. For proofs of the mentioned
facts, as well as a general survey of algebraic extensions, we refer
the reader to \cite{MVW07}.

Given a basis $X$ of $\mathbf{F}$ and $H\leq\mathbf{F}$, we denote
by $\Gamma_{X}\left(H\right)$\marginpar{$\Gamma_{X}\left(H\right)$}
the \emph{Stallings core graph }of $H$ with respect to $X$. This
is a pointed, directed, $X$-labeled graph, such that the words formed
by closed paths around the basepoint are precisely the elements of
$H$, and which is minimal with respect to this property. One way
to construct this graph is by taking the Schreier right coset graph
of $H$ in $\mathbf{F}$ w.r.t.\ $X$ and then deleting all {}``hanging
trees'', i.e., all edges which are not traced by some non-backtracking
loop around the basepoint. Figure \ref{fig:first_core_graph} demonstrates
the core graph of $H=\left\langle ab^{-1}a,a^{-2}b\right\rangle $
for $X=\left\{ a,b\right\} $ and $\mathbf{F}=\mathbf{F}\left(X\right)$.
We refer to \cite{Sta83,KM02,MVW07,Pud11} for further background
on Stallings graphs.

\FigBesBeg[1.7]
\begin{figure}[h]
\centering{}%
\begin{minipage}[t]{0.35\columnwidth}%
\[
\xymatrix@=15pt{\otimes\ar[rr]^{a} &  & \bullet\\
\\
\bullet\ar[rr]^{a}\ar[uu]_{b} &  & \bullet\ar[uull]_{a}\ar[uu]_{b}
}
\]
\end{minipage}\caption{\label{fig:first_core_graph} The core graph $\Gamma_{X}\left(H\right)$
where $X=\left\{ a,b\right\} $ and $H=\left\langle ab^{-1}a,a^{-2}b\right\rangle \leq\mathbf{F}\left(X\right)$.}
\end{figure}

Given the basis $X$, and two subgroups $H,J\leq\mathbf{F}$, there
is a graph morphism (which preserves the basepoint, directions and
labeling) from $\Gamma_{X}\left(H\right)$ to $\Gamma_{X}\left(J\right)$
if and only if $H\leq J$. Such a morphism is unique, when it exists.
Given $H,J\leq\mathbf{F}$, we say that \emph{$H$ $X$-covers $J$}\marginpar{\emph{$X$-covers}}
if $H\leq J$ and the morphism from $\Gamma_{X}\left(H\right)$ to
$\Gamma_{X}\left(J\right)$ is onto. We denote this by $H\covers J$\marginpar{$\covers$}.
(In \cite{MVW07} this is indicated by saying that $J$ is a {}``$X$-principal
overgroup'' of $H$, and by the notation $J\in\mathcal{O}_{X}\left(H\right)$.)

\medskip{}

It is not hard to see (e.g.\ \cite[prop.\ 3.7]{MVW07}, or \cite[claim 3.2]{puder2012measure})
that if $H\leq_{alg}J$, then $H\covers J$ for every basis $X$ of
$\mathbf{F}$. The following conjecture, raised in \cite{MVW07},
asks whether the converse also holds.
\begin{conjecture*}[{\cite[§5(1)]{MVW07}}]
If $H\leq J\leq\mathbf{F}$ and $H\covers J$ for every basis $X$
of $\mathbf{F}$ then $J$ is an algebraic extension of $H$.
\end{conjecture*}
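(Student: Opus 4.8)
Since the abstract announces a counterexample, I expect this conjecture to be \emph{false}, so what follows is less a proof than a plan to test it — one that I anticipate will break down in an instructive way. The easy direction, that $H\leq_{alg}J$ forces $H\covers J$ in every basis, is already granted above; the conjecture is the converse. The natural line of attack is therefore the contrapositive: assume $J$ is \emph{not} algebraic over $H$ and try to produce a single basis $X$ of $\mathbf{F}$ for which the morphism $\Gamma_{X}(H)\to\Gamma_{X}(J)$ fails to be onto. By the definition of algebraic extension together with the Takahasi decomposition recalled above, non-algebraicity yields an intermediate subgroup $H\leq M\lneq J$ with $M$ a proper free factor of $J$, say $J=M*L$ with $L\neq1$.

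Granting this splitting, the optimistic plan would be to pick a basis $X$ adapted to $J=M*L$ — one in which some Nielsen-reduced word records the free factor $L$ — and then argue that the part of $\Gamma_{X}(J)$ contributed by $L$ cannot lie in the image of $\Gamma_{X}(H)$, since $H\leq M$ ``sees'' only the $M$-component. If that worked, the offending edge or vertex would witness $H\not\covers J$ and complete the contrapositive.

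The hard part — and, I believe, the exact point where the conjecture fails — is that $X$ ranges over bases of the \emph{ambient} group $\mathbf{F}$, not of $J$, so there is no reason the splitting $J=M*L$ should be visible in \emph{any} $\Gamma_{X}(J)$. Covering concerns how $\Gamma_{X}(H)$ folds onto $\Gamma_{X}(J)$, and folding can obliterate free-factor structure: a genuine proper free factor $M\supseteq H$ may still $X$-cover $J$ in every basis if the extra rank carried by $L$ is absorbed into loops that $H$ already traverses. Pinning down such a configuration is precisely what a counterexample must do, and it is the quantifier ``for every basis $X$'' that makes both the proof attempt and its refutation delicate.

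Consequently I would abandon the attempted proof and instead search for a refutation, working in $\mathbf{F}_{2}$ where the first half of the paper supplies a complete classification of bases and primitive elements. Concretely, I would look for $H\leq J$ such that $J$ is a proper free extension $M*L$ of some $H\leq M\lneq J$, yet $\Gamma_{X}(J)$ remains a folding quotient of $\Gamma_{X}(H)$ — that is, $J\in\mathcal{O}_{X}(H)$ — for every basis $X$. The primitivity machinery is the right tool here: certifying covering for \emph{all} bases means controlling $\Gamma_{X}(J)$ simultaneously across the $\mathrm{Aut}(\mathbf{F}_{2})$-orbit of a fixed basis, and the classification of primitive words (hence of the splittings $J=M*\langle w\rangle$ that can occur) should reduce this infinite quantifier to a finite, checkable condition. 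Verifying that condition — confirming that $M$ never becomes \emph{visibly} a free factor in any Stallings graph — is the decisive and most laborious step.
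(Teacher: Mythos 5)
Your diagnosis is right -- the conjecture is false, the counterexample must live in $\mathbf{F}_{2}$, and the classification of primitive elements is the tool that tames the quantifier ``for every basis'' -- but the proposal stops exactly where the mathematics begins. You never exhibit a candidate pair $(H,J)$, and the step you defer as ``decisive and most laborious'' is the entire content of the argument. Two concrete ingredients are missing. First, the example itself: the paper takes $H=\left\langle a^{2}b^{2}\right\rangle $ and $J=\left\langle a^{2}b^{2},ab\right\rangle =H*\left\langle ab\right\rangle $, so non-algebraicity is immediate, and everything rides on verifying covering. Second, the reduction that makes ``every basis'' checkable: since $H\covers J$ holds iff $\varphi(H)$ covers $\varphi(J)$ with respect to $\varphi(X)$ for any automorphism $\varphi$, quantifying over all bases $Y$ of $\mathbf{F}_{2}$ with $H,J$ fixed is equivalent to fixing the single basis $X=\{a,b\}$ and quantifying over all bases $\{u,v\}$, i.e.\ showing $\left\langle u^{2}v^{2}\right\rangle \covers\left\langle u^{2}v^{2},uv\right\rangle $ for every basis $\{u,v\}$. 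By Lemma \ref{lem:covers-iff-appears} this becomes a purely combinatorial assertion: \emph{the word $uv$ appears in $\Gamma_{X}\left(\left\langle u^{2}v^{2}\right\rangle \right)$}. Without this reformulation, ``controlling $\Gamma_{X}(J)$ across the $\mathrm{Aut}(\mathbf{F}_{2})$-orbit'' has no finite handle.

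Even granted the reformulation, the verification is not a routine application of the primitivity machinery; it is a genuine case analysis. The paper reduces to cyclically reduced bases via Lemma \ref{lem:general-bases}, proves the appearance of $uv$ for CR bases (Lemma \ref{lem:both-CR}) and for bases with exactly one CR element (Lemma \ref{lem:one-CR}, itself split into six subcases governed by how the conjugating prefix $w$ sits inside powers of $u^{-1}$, and relying on monotonicity of primitives, Proposition \ref{prop:monotone}), and then lifts to arbitrary bases (Lemma \ref{lem:none-CR}) -- which requires carrying an extra hypothesis about which letters emanate from the basepoint, precisely so that Lemma \ref{lem:no-trim} applies and appearance survives folding. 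None of this is guessable from the strategic outline alone, so the proposal should be counted as identifying the correct goal but not as a refutation of the conjecture.
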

The main result of this paper is a counterexample to this conjecture:
\begin{prop*}[Prop.\ \ref{prop:counterexample}]
Let $\mathbf{F}_{2}=\mathbf{F}\left(a,b\right)$ be the free group
on two generators, $H=\left\langle a^{2}b^{2}\right\rangle $, and
$J=\left\langle a^{2}b^{2},ab\right\rangle $. Then $H\covers J$
for every basis $X$ of $\mathbf{F}_{2}$, but $J$ is not an algebraic
extension of $H$.
\end{prop*}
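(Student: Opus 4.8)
The statement splits into two independent assertions, of which the second is immediate and the first is the substantial one.

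\emph{$J$ is not an algebraic extension of $H$.} The two elements $a^2b^2$ and $ab$ do not commute (for instance $(ab)(a^2b^2)=aba^2b^2$ while $(a^2b^2)(ab)=a^2b^2ab$ are distinct reduced words), so the $2$-generated subgroup $J$ is non-cyclic, hence a free group of rank exactly $2$. A free group of rank $2$ generated by $2$ elements is freely generated by them (a surjective endomorphism of a finitely generated free group is an isomorphism), so $\{a^2b^2,ab\}$ is a basis of $J$. In particular $a^2b^2$ is a primitive element of $J$, i.e.\ $H=\langle a^2b^2\rangle$ is a proper free factor of $J$. Taking $M=H$ in the definition yields $H\le M\lneq J$ with $M$ a proper free factor of $J$, so $J$ is not an algebraic extension of $H$.

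\emph{$H\covers J$ for every basis $X$.} First, $a^2b^2$ is not a proper power in $\mathbf{F}_2$, so for every basis $X$ the core graph $\Gamma_X(H)$ is a simple cycle $C$ whose length is the cyclically reduced length of $a^2b^2$ in $X$. I would then record the following dichotomy for $\eta_X\colon\Gamma_X(H)\to\Gamma_X(J)$: its image $\Delta$ is a connected subgraph with no valence-one vertices (a non-backtracking closed path cannot create a spur), hence is itself a core graph, and since $\Delta\subseteq\Gamma_X(J)$ one has $1\le b_1(\Delta)\le b_1(\Gamma_X(J))=2$. If $b_1(\Delta)=2$ then $\Delta=\Gamma_X(J)$, because a proper core subgraph cannot retain the full first Betti number of the rank-$2$ core graph $\Gamma_X(J)$; if $b_1(\Delta)=1$ then $\Delta$ is a cycle, and as $a^2b^2$ is not a proper power the map $C\to\Delta$ is an isomorphism, so $\eta_X$ is injective and $\Delta\subsetneq\Gamma_X(J)$. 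Thus $H\covers J$ with respect to $X$ if and only if $\eta_X$ is \emph{not} injective, i.e.\ if and only if the reduced closed path spelling $a^2b^2$ in $\Gamma_X(J)$ is not a simple cycle (equivalently, two distinct proper prefixes of this word lie in the same right $J$-coset). The task is therefore to prove this loop is non-simple for every basis $X$.

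To handle all bases at once I would pass to automorphisms: writing $X=\beta(\{a,b\})$ identifies $\Gamma_X(J)$ with $\Gamma_{\{a,b\}}(\beta^{-1}J)$, so the claim becomes $\langle\beta(a^2b^2)\rangle\covers\langle\beta(a^2b^2),\beta(ab)\rangle$ in the fixed basis $\{a,b\}$ for every $\beta\in\operatorname{Aut}(\mathbf{F}_2)$. The guiding structural fact is the identity $a^2b^2=a[a,b]a^{-1}(ab)^2$, that is, $(a^2b^2)(ab)^{-2}$ is a conjugate of the commutator $[a,b]$; since $\beta[a,b]$ is again conjugate to $[a,b]^{\pm1}$, the same relation holds for the pair $\bigl(\beta(a^2b^2),\beta(ab)\bigr)$, and in particular $[\beta(a^2b^2)]=2[\beta(ab)]$ in $\mathbf{F}_2^{\mathrm{ab}}$. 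I would use this to show the first generator can never be carried by a simple cycle: the commutator ``twist'' relating $a^2b^2$ to $(ab)^2$ should force a self-overlap of the spelling of $a^2b^2$ however the core graph of $J$ is drawn. Concretely, I expect to run through the three topological types of a rank-$2$ core graph (figure-eight, theta, dumbbell) and, using the immersion into the rose together with the divisibility relation and the commutator identity, rule out in each case that the $a^2b^2$-loop is embedded.

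The main obstacle is precisely this last step. The reduction to automorphisms is clean, but the resulting family of subgroups $\beta(J)$ is infinite and the partner generator $\beta(ab)$ can be arbitrarily long, so there is no finite case check; the real work is to convert the algebraic relation $(a^2b^2)(ab)^{-2}\sim[a,b]$ into a \emph{uniform} combinatorial statement about $\Gamma_X(J)$ that guarantees a coincidence of prefixes. This is where I would invoke the classification of bases and the description of primitive elements of $\mathbf{F}_2$ from the first half of the paper, which is designed exactly to control such rank-$2$ core graphs across all bases.
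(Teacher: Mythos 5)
Your first half is fine: the observation that $J=H*\left\langle ab\right\rangle$ (equivalently, that $\left\{ a^{2}b^{2},ab\right\}$ is a basis of the rank-two subgroup $J$, so that $H$ is a proper free factor) is exactly how the paper disposes of the ``not an algebraic extension'' claim, and your reduction of the covering claim, via automorphisms, to the single-basis statement that $\left\langle u^{2}v^{2}\right\rangle$ $X$-covers $\left\langle u^{2}v^{2},uv\right\rangle$ for every basis $\left\{ u,v\right\}$ of $\mathbf{F}_{2}$ also matches the paper. Your reformulation of ``$H\covers J$'' as ``the $a^{2}b^{2}$-loop in $\Gamma_{X}\left(J\right)$ is not embedded'' is workable and close in spirit to the paper's Lemma \ref{lem:covers-iff-appears} (which instead says $H\covers\left\langle H,w\right\rangle$ iff $w$ appears in $\Gamma_{X}\left(H\right)$), though note that $\Gamma_{X}\left(H\right)$ is in general a cycle \emph{with a tail} rather than a bare cycle when $a^{2}b^{2}$ fails to be cyclically reduced in the basis $X$, so your Betti-number dichotomy needs a small repair (pass to the core of the image).

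The genuine gap is that the argument stops exactly where the real work begins, and you say so yourself (``the main obstacle is precisely this last step''). Nothing in the proposal actually establishes, for an arbitrary basis $\left\{ u,v\right\}$, that $uv$ appears in $\Gamma_{X}\left(\left\langle u^{2}v^{2}\right\rangle \right)$, equivalently that the $u^{2}v^{2}$-loop self-overlaps in $\Gamma_{X}\left(J\right)$. The identity $\left(a^{2}b^{2}\right)\left(ab\right)^{-2}=a\left[a,b\right]a^{-1}$ and the abelianized relation are true but are never converted into the needed combinatorial statement, and the proposed split into the three topological types of a rank-two core graph cannot terminate on its own, as you note, since the graphs $\Gamma_{X}\left(J\right)$ grow without bound as the basis varies. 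The paper's proof of this step is a substantial folding analysis: it first uses the structure theory of bases of $\mathbf{F}_{2}$ (Lemma \ref{lem:general-bases}, Proposition \ref{prop:basis-prefix}, and the monotonicity of cyclically reduced primitives, Proposition \ref{prop:monotone}) to split into the cases where both, exactly one, or neither of $u,v$ is cyclically reduced, and then in each case exhibits an explicit partially folded graph (Lemmas \ref{lem:both-CR}, \ref{lem:one-CR}, \ref{lem:none-CR}) in which $uv$ visibly appears and which is shown, via Lemma \ref{lem:no-trim} and monotonicity, to survive into $\Gamma_{X}\left(\left\langle u^{2}v^{2}\right\rangle \right)$. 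That entire argument is absent here, so the proposal does not yet constitute a proof of the covering assertion.
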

The relation {}``$H\covers J$'' is basis-dependent, while the relation
{}``$H\covers J$ for every basis $X$'' is intrinsic, as is {}``$H\leq_{alg}J$''.
Proposition \ref{prop:counterexample} means that the latter two relations
are different, and this raises the intriguing question of understanding
the algebraic significance of {}``covering with respect to all bases''.

\medskip{}
The proof of Proposition \ref{prop:counterexample} follows from a
thorough analysis of Stallings graphs, using classical results (e.g.\ \cite{nielsen1917isomorphismen,cohn1972markoff,cohen1981does,OZ81})
on primitive elements and bases of $\mathbf{F}_{2}$. It turns out
that these results can also be proven by appealing solely to Stallings
graphs, and we use the opportunity to provide self-contained proofs
for them in Section \ref{sec:Primitives-in-f_2}. Section \ref{sec:Stallings-Graphs}
recalls some basic facts about Stallings graphs and foldings, and
presents two auxiliary lemmas which will be used later on. Finally,
the proof of the counterexample (Proposition \ref{prop:counterexample})
is given in Section \ref{sec:The-counterexample}, and some concluding
remarks in Section \ref{sec:Epilogue}.

\section{Stallings Graphs\label{sec:Stallings-Graphs}}

We assume that the reader is familiar with the theory of Stallings
foldings, but recall the basic facts. If $\Gamma$ is a pointed, directed,
$X$-labeled graph, we denote by $\pi_{1}^{X}\left(\Gamma\right)$\marginpar{$\pi_{1}^{X}\left(\Gamma\right)$}
the subgroup of $\mathbf{F}=\mathbf{F}\left(X\right)$ consisting
of the words which appear as closed loops around the basepoint of
$\Gamma$. The operators $\pi_{1}^{X}$ and $\Gamma_{X}$ constitute
a bijection between subgroups of $\mathbf{F}\left(X\right)$ and $X$-labeled
core graphs, which matches f.g.\ subgroups to finite graphs.

If $\Gamma$ is a finite (pointed, directed) $X$-labeled graph, and
$\pi_{1}^{X}\left(\Gamma\right)=H$, then $\Gamma_{X}\left(H\right)$
is obtained from $\Gamma$ by repeatedly performing one of the following
operations, in any order, until neither of them is possible:
\begin{enumerate}
\item \emph{Folding} - merging two edges with the same label, and the same
origin or terminus (and thus merging also the other ends).
\item \emph{Trimming} - deleting a leaf which is not the basepoint, and
the edge which leads to it.
\end{enumerate}
The following lemma shows that under certain conditions only foldings
are necessary in this process:
\begin{lem}
\label{lem:no-trim}Let $\Gamma$ be a finite, pointed, directed,
$X$-labeled graph such that at every vertex, except possibly the
basepoint, there are at least two types of edges (the type of an edge
consists of its label and direction). Then the core graph $\Gamma_{X}\left(H\right)$
of $H=\pi_{1}^{X}\left(\Gamma\right)$ is obtained from $\Gamma$
by foldings alone (i.e.\ without trimming).\end{lem}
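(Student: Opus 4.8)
The plan is to isolate a single graph-theoretic property of $\Gamma$ that (i) rules out the presence of any leaf other than the basepoint, so that trimming is never applicable, and (ii) is preserved under folding. Once both are established, folding alone suffices: one folds $\Gamma$ until no fold is possible, and the resulting graph, having no trimmable leaf either, admits neither operation and must therefore already be the core graph $\Gamma_{X}\left(H\right)$.

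The property I would track is precisely the hypothesis, phrased as an invariant $(P)$: \emph{every vertex other than the basepoint is incident to edges of at least two distinct types}, where the type of an edge incident to a vertex $v$ records its label together with whether it points towards or away from $v$. First observe that $(P)$ forbids leaves away from the basepoint: a leaf is incident to a single edge and hence realizes a single type, so under $(P)$ every non-basepoint vertex has degree at least two and trimming can never be performed. It then remains to check that $(P)$ survives a single fold.

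For this key step, consider a fold identifying two edges $e_{1},e_{2}$ sharing their label $x$ and, say, their origin $u$ (the shared-terminus case being symmetric), with respective termini $v_{1},v_{2}$; the fold replaces $e_{1},e_{2}$ by one edge and identifies $v_{1}$ with $v_{2}$ into a vertex $v$. Only the types incident to $u$ and to $v$ can change. At $u$ the fold merely collapses the two outgoing $x$-edges into one, so the set of types at $u$ is unchanged. At $v$ I argue that no type is lost: each of $e_{1},e_{2}$ carries type $(x,\text{in})$ at its terminus, so if $v_{1}$ is not the basepoint, then by $(P)$ it carries some type $t\neq(x,\text{in})$, realized by an edge distinct from $e_{1}$ (and from $e_{2}$, which is not even incident to $v_{1}$); this edge remains incident to $v$ after the identification, possibly as a loop, which only adds types. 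Hence $v$ is incident to both $(x,\text{in})$ and $t$, giving at least two types; and if one of $v_{1},v_{2}$ is the basepoint then $v$ is the basepoint and nothing need be checked.

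Combining these, I start from $\Gamma$, which satisfies $(P)$ by assumption, and fold until no fold is possible; each fold preserves $(P)$, so the final graph still satisfies $(P)$ and thus has no trimmable leaf. Admitting neither a fold nor a trim, it is $\Gamma_{X}\left(H\right)$, reached by foldings alone. The only genuine obstacle is the preservation of $(P)$ at the merged vertex $v$, and the observation that makes it go through is that the second type guaranteed at $v_{1}$ (or $v_{2}$) is witnessed by an edge other than the one being folded, so the identification cannot destroy it.
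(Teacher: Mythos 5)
Your proof is correct and follows essentially the same route as the paper's: observe that the hypothesis (at least two edge-types at every non-basepoint vertex) excludes trimmable leaves, and that this property is preserved by each folding step, so folding alone terminates at the core graph. You simply spell out in more detail the paper's one-line claim that folding does not decrease the number of edge-types at a vertex.
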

\begin{proof}
Evidently, $\Gamma$ cannot have leaves, except for possibly the basepoint.
Folding steps do not decrease the number of types of edges at a vertex,
so that the property in the statement still holds after every folding
step, and no new leaves are created throughout the process.
\end{proof}
This simple lemma will prove out to be extremely useful. It already
plays a role in Lemma \ref{lem:covers-iff-appears}, which characterizes
$X$-covering in simple extensions.
\begin{defn}
\label{def:appear}Let $\Gamma$ be a pointed and directed $X$-labeled
graph and let $w\in\mathbf{F}$. We say that \emph{$w$ appears}\marginpar{\emph{appears in}}\emph{
in $\Gamma$} if there exist paths $p_{1},p_{2}$ in $\Gamma$ such
that $p_{1}$ starts at the basepoint, $p_{2}$ terminates at the
basepoint, and $w=p_{1}p_{2}$ (i.e.\ $p_{1}p_{2}$ is the presentation
of $w$ as a reduced word in $X\bigcup X^{-1}$).
\end{defn}
For example, for $H$ in Figure \ref{fig:first_core_graph}, $a^{3}$
and $a^{2}ba^{-1}$ appear in $\Gamma_{X}\left(H\right)$, but $a^{2}b^{2}$
does not. Notice that if $w$ appears in $\Gamma$, s.t.\ $\pi_{1}^{X}\left(\Gamma\right)=H$,
and $\Gamma$ satisfies the conditions of Lemma \ref{lem:no-trim},
then $w$ appears in $\Gamma_{X}\left(H\right)$ as well. This will
play a significant part in Section \ref{sec:The-counterexample}.
\begin{lem}
\label{lem:covers-iff-appears}Let $H\leq\mathbf{F}$, $w\in\mathbf{F}$
and $J=\left\langle H,w\right\rangle $. Then $H\covers J$ iff $w$
appears in $\Gamma_{X}\left(H\right)$.
\end{lem}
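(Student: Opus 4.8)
The plan is to realize $\Gamma_X(J)$ as an explicit folding of $\Gamma_X(H)$ with an attached petal, and to read off both implications from which part of the petal survives. Write $w=x_1x_2\cdots x_n$ for the reduced form of $w$ in $X\cup X^{-1}$, and form the graph $\Omega$ obtained by wedging onto $\Gamma_X(H)$, at the basepoint $\otimes$, a new oriented path $\beta_0\to\beta_1\to\cdots\to\beta_n$ with $\beta_0=\beta_n=\otimes$ whose edges spell $w$. Then $\pi_1^X(\Omega)=\langle H,w\rangle=J$, so $\Gamma_X(J)$ is obtained from $\Omega$ by folding and trimming. First I would check that $\Omega$ meets the hypotheses of Lemma \ref{lem:no-trim}: a leafless folded graph has at least two edge-types at every vertex, since two incident edge-ends of the same type would be foldable, so this holds at every non-basepoint vertex of $\Gamma_X(H)$; and every interior petal vertex $\beta_i$ carries exactly the two types of its two incident edges. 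Hence $\Gamma_X(J)$ is obtained from $\Omega$ by foldings alone, and the canonical morphism $\eta\colon\Gamma_X(H)\to\Gamma_X(J)$ is precisely the restriction to $\Gamma_X(H)$ of the surjective folding map $\Omega\to\Gamma_X(J)$.

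Next I would track the folding of the petal. Starting from $\beta_0=\otimes$, the prefix of the petal folds onto the unique reduced path in $\Gamma_X(H)$ reading $x_1,x_2,\ldots$, proceeding exactly until the first letter, say $x_{k+1}$, for which no suitable edge is available at the current image vertex; let $u_1=x_1\cdots x_k$ be this maximal readable prefix. Symmetrically, folding the petal backwards from $\beta_n=\otimes$ absorbs the maximal suffix $u_2=x_{j+1}\cdots x_n$ that can be read into $\otimes$, ending at $\beta_j$. By confluence of foldings, what remains after all folding is $\Gamma_X(H)$ together with a single arc spelling the middle word $x_{k+1}\cdots x_j$, attached at the images of $\beta_k$ and $\beta_j$; this arc is empty precisely when $k\ge j$. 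The step I expect to be the crux is justifying that no further folding touches this middle arc: this follows from the maximality of $u_1$ and $u_2$ (there is no competing edge at either endpoint) together with the fact that the interior arc vertices are new, of degree two, and carry distinct edge-types.

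With this picture in hand, both directions fall out. Since there is no trimming, the arc, when nonempty, contributes edges of $\Gamma_X(J)$ lying outside $\eta(\Gamma_X(H))$, so $\eta$ is onto iff the arc is empty, i.e.\ iff $k\ge j$. On the other hand, $w$ appears in $\Gamma_X(H)$ iff there is a split $w=u_1'u_2'$ with $u_1'$ readable from $\otimes$ and $u_2'$ readable into $\otimes$; since the maximal readable prefix has length $k$ and the maximal readable suffix length $n-j$, such a split exists iff some index $m$ satisfies $j\le m\le k$, that is, again iff $k\ge j$. Combining, $H\covers J$ iff $k\ge j$ iff $w$ appears in $\Gamma_X(H)$, which is the claim. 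I would close by observing that the degenerate cases ($w\in H$, or overlapping $u_1,u_2$ forcing identifications inside $\Gamma_X(H)$) are already consistent with this dichotomy and need no separate treatment.
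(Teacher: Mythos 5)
Your proof is correct and follows essentially the same route as the paper: both attach $w$ to $\Gamma_{X}\left(H\right)$, invoke Lemma \ref{lem:no-trim} to rule out trimming, and use the maximality of the readable prefix and suffix to see that the leftover middle arc (nonempty exactly when $w$ does not appear) is unfoldable and hence survives into $\Gamma_{X}\left(J\right)$. The only difference is organizational: the paper runs two tailored constructions (identifying the two endpoints directly when $w$ appears, and attaching only the middle arc $p_{w'}$ when it does not), whereas you fold a single full $w$-petal and read off both implications from one picture.
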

\textit{Proof.} Assume first that $w$ appears in $\Gamma_{X}\left(H\right)$,
and let $p_{1},p_{2}$ be as in Definition \ref{def:appear}. Denote
by $\Gamma$ the graph obtained from $\Gamma_{X}\left(H\right)$ by
identification of $p_{1}$'s endpoint and $p_{2}$'s start-point.
We have $\pi_{1}^{X}\left(\Gamma\right)=J$, and the (pointed, directed,
labeled) map from $\Gamma_{X}\left(H\right)$ to $\Gamma$ is onto.
Since $\Gamma$ satisfies the conditions of Lemma \ref{lem:no-trim},
$\Gamma_{X}\left(J\right)$ is obtained from it by foldings alone.
We have now that $\Gamma_{X}\left(H\right)$ maps onto $\Gamma$,
which maps onto $\Gamma_{X}\left(J\right)$, and by transitivity it
follows that $H\covers J$.

\begin{wrapfigure}{r}{0.4\columnwidth}%
\xy(0,0)*{\otimes}="bp"+(-6,8)*{\Gamma_{X}\left(H\right)};	 (5,0)*\xycircle(20,20){--}; (22.32,10)*{\bullet}="s1"  +(4,4)*{v_1}; (22.32,-10)*{\bullet}="s2" +(4,-4)*{v_2}; "bp"; "s1" **\crv{(10,20) & (15,-20)} ?(.43)*\dir{>>}; (6,8)*{\scriptstyle p_{1}}; "s2"; "bp" **\crv{(20,20) & (5,-10)} ?(.33)*\dir{>>}; (6,-4)*{\scriptstyle p_2}; "s1";"s2" **\crv{(40,10) & (40,-10)} ?(.5)*\dir{>>}; (40,0)*{\scriptstyle p_{w'}}; \endxy \caption{\label{fig:handle}$\Gamma=\Gamma_{X}\left(H\right)\bigcup p_{w'}$}
\end{wrapfigure}%

Assume now that $w$ does not appear in $\Gamma_{X}\left(H\right)$.
Let $p_{1}$ be the maximal path beginning at the basepoint of $\Gamma_{X}\left(H\right)$
which is a prefix of $w$, and denote by $v_{1}$ its endpoint. Let
$p_{2}$ be the maximal path ending at the basepoint of $\Gamma_{X}\left(H\right)$
which is a suffix of $w$, and $v_{2}$ its beginning. If $w=p_{1}w'p_{2}$,
take $\Gamma=\Gamma_{X}\left(H\right)\bigcup p_{w'}$ where $p_{w'}$
is a path labeled by $w'$, whose beginning is attached to $v_{1}$,
and whose endpoint to $v_{2}$ (see Figure \ref{fig:handle}). Now
$\pi_{1}^{X}\left(\Gamma\right)=J$ and $\Gamma$ has no foldable
edges nor leaves, i.e.\ $\Gamma=\Gamma_{X}\left(J\right)$. Thus
$\Gamma_{X}\left(H\right)$ is a subgraph of $\Gamma_{X}\left(J\right)$,
and in particular does not map onto it. (In fact, since the map from
$\Gamma_{X}\left(H\right)$ to $\Gamma_{X}\left(J\right)$ is injective,
$H$ is a free factor of $J$.)\qed
\begin{rem}
With some further work, the basic idea of Lemma \ref{lem:covers-iff-appears}
can lead to an algorithm to detect primitive words and free factors
in $\mathbf{F}$. See \cite[Thm 1]{Pud11}.
\end{rem}

\section{\label{sec:Primitives-in-f_2}Primitives in $\mathbf{F}_{2}$}

In this section we give new proofs for the classical theorems on primitive
words and bases of $\mathbf{F}_{2}$ \cite{nielsen1917isomorphismen,cohn1972markoff,cohen1981does,OZ81}.
Throughout the section $X$ denotes the basis $\left\{ a,b\right\} $
of $\mathbf{F}_{2}=\mathbf{F}\left(a,b\right)$.

We start with the following lemma, which reduces the classification
of bases of $\mathbf{F}_{2}$ to that of cyclically reduced (henceforth:
CR)\marginpar{\emph{CR}} bases.
\begin{lem}
\label{lem:general-bases}Let $Y=\left\{ \overline{u},\overline{v}\right\} $
be any basis of $\mathbf{F}_{2}$.
\begin{enumerate}
\item Write $\overline{u}=xux^{-1}$ %
\footnote{By {}``write'' we mean that $xux^{-1}$ is a reduced expression
of $\overline{u}$ - no cancellation is needed. This convention will
repeat throughout the paper, and we will not mention it again.%
} and $\overline{v}=yvy^{-1}$ with $u,v$ CR. Then either $x$ is
a prefix of $y$ or $y$ is a prefix of $x$.
\item Assume that $x$ is a prefix of $y$, and write $\overline{u}=xux^{-1}$
and $\overline{v}=xwvw^{-1}x^{-1}$. Then $w$ is a prefix of some
power of $ $$u$ or of $u^{-1}$ (which implies that $w^{-1}uw$
is a cyclic rotation of $u$).
\item The basis $\left(xw\right)^{-1}Yxw=\left\{ w^{-1}uw,v\right\} $ is
CR.
\end{enumerate}

Therefore, any basis of $\mathbf{F}_{2}$ is of the form $\left\{ xux^{-1},xwvw^{-1}x^{-1}\right\} $
where $w$ is a prefix of some power of $u^{\pm1}$, $\left\{ w^{-1}uw,v\right\} $
is a CR basis, and $x$ is any word s.t.\ $xux^{-1}$ and $xwvw^{-1}x^{-1}$
are reduced.

\end{lem}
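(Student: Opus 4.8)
The plan is to translate everything into Stallings graphs and read all three conclusions off a single folding. Since $Y=\{\overline u,\overline v\}$ is a basis, $\langle\overline u,\overline v\rangle=\mathbf{F}_{2}$, so the core graph $\Gamma_{X}\left(\mathbf{F}_{2}\right)$ is the single-vertex graph $R$ carrying one $a$-loop and one $b$-loop. I build the \emph{bouquet} $B$ in which $\overline u$ and $\overline v$ are drawn as two ``lollipops'' sharing the basepoint: a reduced stem $x$ (resp.\ $y$) running from the basepoint to a vertex $p$ (resp.\ $q$), with the CR cycle $u$ (resp.\ $v$) attached there, so that $\pi_{1}^{X}\left(B\right)=\mathbf{F}_{2}$. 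Every non-basepoint vertex of $B$ carries at least two edge-types --- this is exactly the statement that the stems are reduced, the cycles are CR, and the junctions $x\mid u$ and $u\mid x^{-1}$ are reduced --- so Lemma \ref{lem:no-trim} applies and $R$ is obtained from $B$ by \emph{foldings alone}. Because the fully folded form is unique, any fold-free graph reached from $B$ must equal $R$, and in particular must have a single vertex; this uniqueness is the contradiction engine for the whole lemma.

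For (1), let $z$ be the longest common prefix of $x$ and $y$ and fold it, arriving at a vertex $c$. If neither stem is a prefix of the other, then $x=zs\cdots$ and $y=zt\cdots$ with $s\neq t$ the first letters after $z$. I claim the resulting graph is already fold-free: the reduced path $z$ has nothing to fold; the two arms hanging from $c$ are individually folded (reduced stems ending in CR cycles) and meet only at $c$, leaving it through distinct types $s,t$; and neither $s$ nor $t$ can fold against the incoming $z$-edge, since $x,y$ are reduced forces $s,t\neq z_{\mathrm{last}}^{-1}$. A fold-free graph with at least two vertices cannot be $R$, a contradiction; hence one stem is a prefix of the other.

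For (2), assume $x$ is a prefix of $y$ and write the stem of $\overline v$ as $xw$, so $\overline v=xwvw^{-1}x^{-1}$. Folding the common prefix $x$ brings both lollipops to $p$, where the $u$-cycle is attached and from which the residual stem $w$ runs to $q$. Choosing a folding order in which everything but the $v$-lollipop is fully folded first, I follow $w$ from $p$: its first edge leaves $p$ with label $w_{1}$, and it cannot fold against the incoming $x$-edge (reducedness of $xwvw^{-1}x^{-1}$ gives $w_{1}\neq x_{\mathrm{last}}^{-1}$), so the only candidates are the two $u$-cycle edges at $p$, of types $u_{1}$ and $u_{m}^{-1}$ where $u=u_{1}\cdots u_{m}$. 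If it folds with neither, the vertex $q$ never merges and the folded graph keeps more than one vertex --- impossible. Hence $w_{1}\in\{u_{1},u_{m}^{-1}\}$ and $w$ begins to run along the $u$-cycle, forward or backward; cyclic reducedness of $u$ guarantees this cycle never folds onto itself and so remains a well-defined track, and a short induction then forces $w$ to continue along it, i.e.\ $w$ is a prefix of a power of $u$ or of $u^{-1}$, whence $w^{-1}uw$ is a cyclic rotation of $u$.

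Part (3) is immediate: conjugating $Y$ by $xw$ gives $\{w^{-1}uw,\,v\}$, and $w^{-1}uw$, being a cyclic rotation of the CR word $u$, is itself CR, while $v$ is CR by construction, so this basis is CR. Combining (1)--(3) yields the stated description of an arbitrary basis. I expect part (2) to be the main obstacle: the delicate points are to argue rigorously that $w$ \emph{must} fold (not merely that it folds \emph{along} $u$ when it does) and to run the induction so that the CR cycle $u$ never collapses and thereby keeps offering $w$ a unique forced direction. Once the folding picture is set up, everything else is bookkeeping.
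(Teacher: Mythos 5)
Your proposal is correct and follows essentially the same route as the paper: build the two-lollipop graph with stems $x,y$ and CR cycles $u,v$, invoke Lemma \ref{lem:no-trim} to conclude it must reach the rose by foldings alone, and read off (1) from the forced merging of the shorter stem into the longer, (2) from $w$ being forced to wind along the $u$-cycle, and (3) from cyclic rotation preserving cyclic reducedness. The extra care you flag for part (2) --- that $w$ \emph{must} keep folding because the end result has a single vertex --- is exactly the argument the paper uses.
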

\begin{proof}
The graph $\Gamma=\xymatrix@1@C=15pt{\bullet\ar@(dl,ul)[]^{v}&\otimes\ar[r]^x\ar[l]_y&\bullet\ar@(dr,ur)[]_{u}}$
satisfies $\pi_{1}^{X}\left(\Gamma\right)=\left\langle \overline{u},\overline{v}\right\rangle =\mathbf{F}_{2}$.
It also satisfies the conditions of Lemma \ref{lem:no-trim}, and
must therefore fold into $\Gamma_{X}\left(\mathbf{F}_{2}\right)=\vphantom{\Big|}\xymatrix@1{\otimes\ar@(dl,ul)[]^{a}\ar@(dr,ur)[]_{b}}$.
The only vertex at which folding may occur is the basepoint $\otimes$,
and after the first identification of edges the only possible folding
place is at the identified termini. Continuing in this manner shows
that for $\Gamma$ to fold into $\Gamma_{X}\left(\mathbf{F}_{2}\right)$,
the shorter word among $x,y$ must be completely merged with a prefix
of the longer one, giving $\left(1\right)$. By the same arguments,
the graph $\Gamma'=\xymatrix@1@C=15pt{\bullet\ar@(dl,ul)[]^{v}&\otimes\ar@(dr,ur)[]_{u}\ar[l]_w}$
must fold into $\Gamma_{X}\left(\mathbf{F}_{2}\right)$, and for this
to happen $w$ must wind itself completely around $u$, or around
$u^{-1}$ (i.e.\ $w$ must be a prefix of some power of $u$ or of
$u^{-1}$). It follows that $w^{-1}uw$ is a cyclic rotation of $u$,
and $\left\{ w^{-1}uw,v\right\} $ is a CR basis.
\end{proof}
Moving on to CR bases, we have the following:
\begin{prop}
\label{prop:basis-prefix}Let $\left\{ u,v\right\} $ be a CR basis
of $\mathbf{\mathbf{F}}_{2}$, such that $\left|u\right|+\left|v\right|\geq3$\ %
\footnote{Here $\left|w\right|$ is the length of $w$ as a reduced word in
$X\cup X^{-1}$.%
}, and $\left|u\right|\leq\left|v\right|$. Then either $u$ is a prefix
or a suffix of $v$, or $u^{-1}$ is.\end{prop}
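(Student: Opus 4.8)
The plan is to realise the basis $\{u,v\}$ by a single folding and then read off how the short cycle lies along the long one. I would let $C_u,C_v$ be the cycles of lengths $|u|,|v|$ labelled by $u,v$, and take $\Gamma$ to be their wedge at a common basepoint $\otimes$, so that $\pi_1^X(\Gamma)=\langle u,v\rangle=\mathbf F_2$. Since $u,v$ are cyclically reduced, the only vertex of $\Gamma$ carrying fewer than two edge-types is $\otimes$, so Lemma \ref{lem:no-trim} applies and $\Gamma$ folds, without trimming, onto $\Gamma_X(\mathbf F_2)$, the two-petalled rose $R$. As $u,v$ are primitive they are not proper powers, hence $C_u,C_v$ are already reduced; no fold is possible inside either cycle, and every fold begins at $\otimes$. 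The one rigidity statement I will lean on throughout is that $R$ is the unique reduced core graph satisfying the hypothesis of Lemma \ref{lem:no-trim} with $\pi_1^X=\mathbf F_2$ — any other folded core graph carries a proper subgroup.

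Because the conclusion is unaffected by replacing $u$ by $u^{-1}$ or $v$ by $v^{-1}$ (these merely toggle prefix$\leftrightarrow$suffix and $v\leftrightarrow v^{-1}$, and the four resulting statements are equivalent), I may assume the first fold runs along a common prefix of $u$ and $v$; let $\ell$ be the length of the longest such prefix. Folding it identifies the first $\ell$ edges of $C_u$ with those of $C_v$. If $\ell=|u|$ then $u$ is literally a prefix of $v$ (here $|u|\le|v|$ is used, and $\ell=|u|=|v|$ is excluded as it would force $u=v$), and we are done. Otherwise I obtain a theta-shaped graph, with the shared prefix $P$ running from $\otimes$ to a vertex $z$ and the remaining arcs $A_u,A_v$ running from $z$ back to $\otimes$. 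Inspecting the three edge-germs at $z$ and the three at $\otimes$, and using only that $u,v$ are reduced and cyclically reduced, one sees that every conceivable fold is either a forbidden backtrack or contradicts the maximality of $\ell$, \emph{except} the one merging the terminal edges of $A_u$ and $A_v$; that fold requires $u$ and $v$ to share their last letter and then proceeds along their longest common suffix $S$ of length $m$. By the rigidity above the graph cannot become reduced before it equals $R$, so this suffix-fold must occur; a leftover middle arc would leave a reduced non-rose theta (impossible), while a suffix longer than $A_u$ would force two edges of $C_v$ to fold together and drop the rank (also impossible). Hence $\ell+m=|u|$, so $u=PS$ and $v=P\mu S$ with $\mu$ a nonempty middle word, $|\mu|=|v|-|u|$, and the folded graph becomes a lollipop with $\pi_1^X=\langle u,P\mu P^{-1}\rangle=\langle SP,\mu\rangle$ up to conjugation.

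At this stage $\{u,v\}$ is a basis iff $\{SP,\mu\}$ is, and the latter has strictly smaller total length $|SP|+|\mu|=|v|$, which sets up an induction on $|u|+|v|$ (the base cases $|u|+|v|=3$ being an immediate finite check). Applying the induction hypothesis to the cyclic reduction of $\{SP,\mu\}$ returns that one of $SP,(SP)^{-1}$ is a prefix or suffix of $\mu$; substituting back into $v=P\mu S$ and using that $v$ is reduced (which discards the two $(SP)^{-1}$ options outright) rewrites $v$ as $u\cdot(\ast)$ or $(\ast)\cdot u$, i.e.\ exhibits $u$ as a prefix or a suffix of $v$. I expect the main obstacle to be exactly this final transport: I must check that the shortening $\{u,v\}\rightsquigarrow\{SP,\mu\}$ stays within the hypotheses of the proposition — the cyclic reduction of $\mu$ may introduce a conjugating word, which should be reabsorbed through Lemma \ref{lem:general-bases} — and, most delicately, that the regime $|v|<2|u|$, in which $\mu$ rather than $SP$ is the shorter generator, cannot actually realise the lollipop configuration at all (so that we are in fact already in the case $\ell=|u|$ there). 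Controlling this short regime, and thereby guaranteeing that the prefix/suffix relation is genuinely recovered at the original level rather than lost in the descent, is the crux of the argument.
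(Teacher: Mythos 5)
Your opening moves are sound: the wedge of the two cycles folds onto the rose by Lemma \ref{lem:no-trim}, the rose is the unique folded leafless graph with $\pi_{1}^{X}=\mathbf{F}_{2}$, and your germ analysis of the theta graph correctly forces either $\ell=\left|u\right|$ or the lollipop decomposition $u=PS$, $v=P\mu S$ with $P,S,\mu$ all nonempty and $\left\{ SP,\mu\right\} $ a basis. But the argument you defer to --- the descent --- is exactly where the proof lives, and the specific claim you lean on to close the short regime is false. Take $u=ab$, $v=aab$: this is a CR basis with $\left|v\right|<2\left|u\right|$, the longest common prefix is $P=a$ of length $\ell=1<\left|u\right|$, the suffix fold gives $S=b$, $\mu=a$, and the lollipop is realised with $\mu$ strictly shorter than $SP=ba$. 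So the regime $\left|v\right|<2\left|u\right|$ does produce the lollipop, and you cannot reduce it to the case $\ell=\left|u\right|$. (In this example a \emph{different} normalization --- folding the common suffix of $u,v$ first --- would give $\ell=\left|u\right|$, but choosing the right normalization in advance amounts to already knowing which of $u^{\pm1}$ is a prefix or suffix of $v$, i.e.\ to assuming the conclusion.) In that regime the induction hypothesis only tells you that $\mu^{\pm1}$ is a prefix or suffix of $SP$, and extracting from this that $u^{\pm1}$ is a prefix or suffix of $v=P\mu S$ is a genuine periodicity argument, not a substitution; it is compounded by the fact that $\mu$ need not be cyclically reduced, so the inductive hypothesis does not even apply to $\left\{ SP,\mu\right\} $ directly. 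None of this is carried out, so the proof does not close.

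For comparison, the paper avoids the induction altogether. It uses Lemma \ref{lem:covers-iff-appears} to place $u$ inside the $v$-cycle via its \emph{maximal} prefix $p'$ and \emph{maximal} suffix $s'$ along $v$; these are allowed to overlap, giving $u=pms$, $v=ptms$ with $m$ a prefix of a power of $t$. That overlap, parametrized as $m=\left(qr\right)^{n}q$ or $m=t^{n}$, is precisely the periodicity structure your short regime hides, and the paper then exhibits, in each case, an explicit folded non-rose graph onto which the wedge folds whenever $p,s\neq1$ --- contradicting $\left\langle u,v\right\rangle =\mathbf{F}_{2}$ in one stroke. If you want to salvage your inductive route, you would need to replace your disjoint prefix/suffix decomposition by one that tracks this overlap, or else prove the transport step for $\left|\mu\right|<\left|SP\right|$ directly; as written, the "crux" you flag is not a technicality but the missing core of the proof.
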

\begin{proof}
Since $\left|v\right|\geq2$ and $v$ is not a proper power, $v$
contains both $a$ and $b$, and thus $\left\langle v\right\rangle \covers\mathbf{\mathbf{F}}_{2}$.
Since $\mathbf{F}_{2}=\left\langle u,v\right\rangle $, Lemma \ref{lem:covers-iff-appears}
implies that $u$ appears in $\Gamma_{X}\left(\left\langle v\right\rangle \right)$,
which is just a cycle labeled by $v$ as a word in $X\cup X^{-1}$.

Let $p'$ be the maximal prefix of $u$ which is a path emanating
from the basepoint of $\Gamma_{X}\left(\left\langle v\right\rangle \right)$.
Since $\left|u\right|\leq\left|v\right|$, this means that $p'$ is
a prefix of $v$ or of $v^{-1}$. By inverting $v$ if necessary we
assume that $p'$ is a prefix of $v$. Let $s'$ be the maximal suffix
of $u$ which is a path ending at the basepoint of $\Gamma_{X}\left(\left\langle v\right\rangle \right)$.
Since $u$ is CR, $s'$ cannot be a suffix of $v^{-1}$, and must
be a suffix of $v$. 

Let $m$ be the middle part of $u$ where $p'$ and $s'$ overlap
(it may be empty: $\left|m\right|=\left|p'\right|+\left|s'\right|-\left|u\right|\geq0$).
Write $p'=pm$, $s'=ms$, which means that $u=pms$ (see Figure \ref{fig:v-decomp}).
Thus, if $p$ is empty then $u=pms=s'$ is a suffix of $v$, and if
$s$ is empty then $u$ is a prefix of $v$. We proceed to show that
they cannot be both nonempty.

\begin{figure}[h]
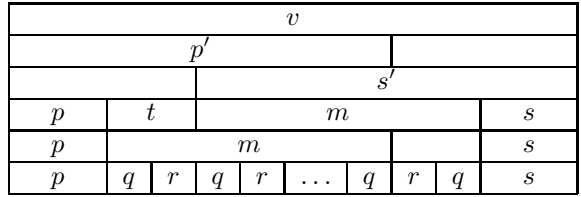

\noindent \centering{}%
\begin{tabular}{|c|c|c|c|c|c|c|c|c|c|c}
\cline{1-10} 
\multicolumn{10}{|c|}{$v$} & \tabularnewline
\cline{1-10} 
\multicolumn{7}{|c|}{$p'$} & \multicolumn{3}{c|}{} & \tabularnewline
\cline{1-10} 
\multicolumn{3}{|c|}{} & \multicolumn{7}{c|}{$s'$} & \tabularnewline
\cline{1-10} 
\quad{}$p$\quad{} & \multicolumn{2}{c|}{$t$} & \multicolumn{6}{c|}{$m$} & \quad{}$s$\quad{} & \tabularnewline
\cline{1-10} 
\quad{}$p$\quad{} & \multicolumn{6}{c|}{$m$} & \multicolumn{2}{c|}{} & \quad{}$s$\quad{} & \tabularnewline
\cline{1-10} 
\quad{}$p$\quad{} & $q$ & $r$ & $q$ & $r$ & $\ldots$ & $q$ & $r$ & $q$ & \quad{}$s$\quad{} & \tabularnewline
\cline{1-10} 
\end{tabular}\caption{\label{fig:v-decomp}Illustration of the decomposition of $v$.}
\end{figure}

Let $\Gamma=\xymatrix@1{\otimes\ar@(dl,ul)[]^{u}\ar@(dr,ur)[]_{v}}$.
Since $\pi_{1}^{X}\left(\Gamma\right)=\left\langle u,v\right\rangle =\mathbf{F}_{2}$
and $\Gamma$ satisfies the conditions of Lemma \ref{lem:no-trim},
it must fold into $\Gamma_{X}\left(\mathbf{F}_{2}\right)=\vphantom{\Big|}\xymatrix@1{\otimes\ar@(dl,ul)[]^{a}\ar@(dr,ur)[]_{b}}$,
and we will show that this cannot happen if $p,s\neq1$.

Assume therefore that $p,s\neq1$. Since $\left|v\right|\geq\left|u\right|$
we can write $v=ptms$, and $t\neq1$ since otherwise $u=v$ (this
shows, in particular, that $\left|v\right|>\left|u\right|$). Since
$pm$ is a prefix of $v$, $m$ is a prefix of $tm$, which means
that $m$ is a prefix of some (positive) power of $t$ (see again
Figure \ref{fig:v-decomp}). We consider two cases:

\medskip{}

\textbf{\emph{Case }}\textbf{$\boldsymbol{\left(i\right)}$}: $m$
is not a power of $t$. In this case $t=qr$ with $q,r\neq1$, and
$m=\left(qr\right)^{n}q$ with $n\geq0$ (see Figure \ref{fig:v-decomp};
$n=0$ corresponds to the possibility that $p'$ and $s'$ do not
overlap in $v$). Since $u=p\left(qr\right)^{n}qs$ and $v=p\left(qr\right)^{n+1}qs$,
$\Gamma$ folds into $\Gamma'=\vcenter{\xymatrix@1@R=1.5pt@C=15pt{ & \bullet\ar[dd]_{q}\\ \otimes\ar[ru]^{p}\\  & \bullet\ar[lu]^{s}\ar@/_{1pc}/[uu]^{r} } }$.

We aim to show that no folding can occur in $\Gamma'$, but let us
first introduce the following notations: for $w\in\mathbf{F}\left(X\right)$,
we denote by $w_{1}$ the first letter of $w$ as a reduced word in
$X\cup X^{-1}$, and for two words $w,w'$ we write $w\perp w'$ \marginpar{$\perp$}to
indicate that $w_{1}\neq\left(w'\right)_{1}$. Namely, $w\perp w'$
implies that no folding occurs in $\smash{\xymatrix@1{{}&\bullet\ar[l]_{w}\ar[r]^{w'}&{}}}$.

Returning to $\Gamma'$, we have $p\perp s^{-1}$ since $u$ (or equivalently
$v$) is CR, so no folding occurs at $\otimes$. Since $v=p\left(qr\right)^{n+1}qs$
is reduced, $p^{-1},r^{-1}\perp q$ and $q^{-1}\perp r,s$. We also
have $r\perp s$, for otherwise $p's_{1}=p\left(qr\right)^{n}qr_{1}$
would be a common prefix of $u=p's$ and $v=p\left(qr\right)^{n+1}qs$,
contradicting the maximality of $p'$. Finally, $r^{-1}\perp p^{-1}$
follows in the same way from the maximality of $s'$, and we conclude
that $\Gamma'$ cannot be folded any further, i.e.\ $\Gamma'=\Gamma_{X}\left(\left\langle u,v\right\rangle \right)$,
which contradicts $\Gamma_{X}\left(\left\langle u,v\right\rangle \right)=\Gamma_{X}\left(\mathbf{F}_{2}\right)$.

\textbf{\emph{Case }}\textbf{$\boldsymbol{\left(ii\right)}$}: $m$
equals a power of $t$, $m=t^{n}$ ($n\geq0$), so that $u=pt^{n}s$
and $v=pt^{n+1}s$. This time $\Gamma$ folds into $\Gamma'=\xymatrix@1{\otimes\ar@/^{0.7pc}/[r]_{p}&\bullet\ar@(ur,dr)[]_{t}\ar@/^{0.7pc}/[l]_s}$.
We have $p\perp s^{-1}$ as before; $p^{-1}\perp t$ and $t^{-1}\perp s$
follows from $v=pt^{n+1}s$ being reduced; $s\perp t$ holds, since
otherwise $pt^{n}s_{1}=pt^{n}t_{1}$ would be a common prefix of $u$
and $v$, contradicting the maximality of $p'=pt^{n}$; likewise,
$p^{-1}\perp t^{-1}$ by the maximality of $s'$. Now, if $n>0$ then
$t\perp t^{-1}$ since $v=pt^{n+1}s$ is reduced, and if $n=0$ then
$p^{-1}\perp s$ since $u=ps$ is reduced. In either case, $\Gamma'$
cannot fold into $\Gamma_{X}\left(\mathbf{F}_{2}\right)$: For $n=0$,
assuming that $\Gamma'$ folds at all, $\Gamma_{X}\left(\left\langle u,v\right\rangle \right)=\xymatrix@1{\otimes\ar@/^{0.7pc}/[r]_{p}&\bullet\ar@/^{0.7pc}/[l]_{s}\ar[r]^{r}&\bullet\ar@(ur,dr)[]_{t'}}$,
where $t=rt'r^{-1}$ with $t'$ CR. Thus, $\Gamma_{X}\left(\left\langle u,v\right\rangle \right)\neq\Gamma_{X}\left(\mathbf{F}_{2}\right)$.
For $n>0$, $\Gamma'$ folds into $\xymatrix@1{\otimes\ar@/^{0.9pc}/[r]_{\smash{p'}\vphantom{P}}&\bullet\ar@/^{0.9pc}/[l]_{\smash{s'}}\ar[r]^{r}&\bullet\ar@(ur,dr)[]_{t}}$
where $r$ is the common suffix of $p$ and $s^{-1}$, so that $p=p'r$,
$s=s'r$. If $p',s'\neq1$ we are done, and $p'=s'=1$ is impossible
since $p\perp s^{-1}$. If $p'=1\neq s'$ then the graph folds into
$\xymatrix@1@C=15pt{\bullet\ar@(dl,ul)[]_{s''}&\otimes\ar[r]^r\ar[l]_y&\bullet\ar@(dr,ur)[]^{t}}$
where $s''$ is CR and $s'=ys''y^{-1}$, and likewise for $p'\neq1=s'$.\end{proof}
\begin{defn*}
A word $w\in\mathbf{F}\left(a,b\right)$ is \emph{monotone}\marginpar{\emph{monotone}}\emph{
}if for every letter ($a$ or $b$) all the exponents of this letter
in $w$ have the same sign. \end{defn*}
\begin{prop}
\label{prop:monotone}A CR primitive word in $\mathbf{F}_{2}$ is
monotone.\end{prop}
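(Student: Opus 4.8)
The plan is to argue by induction on $\left|v\right|$, reducing a CR primitive $v$ to a shorter one sitting inside a CR basis, and then reassembling. First some reductions: monotonicity, cyclic reducedness, primitivity and length are all preserved by each of the three operations $w\mapsto w^{-1}$, $w\mapsto\widetilde{w}$ (word reversal) and cyclic rotation, and each of these sends bases to bases (for reversal this follows since it is the composition of inversion with the automorphism fixing the generators up to inverse). Since $v$ is primitive it lies in some basis, and by Lemma \ref{lem:general-bases} that basis is conjugate to a CR basis in which $v$ is replaced by a cyclic rotation of itself. Hence it suffices to prove: in every CR basis $\left\{u,v\right\}$ with $\left|u\right|\leq\left|v\right|$ the word $v$ is monotone. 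By induction $u$, being a shorter CR primitive, is monotone as well.

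Base cases ($\left|u\right|+\left|v\right|\leq2$, i.e.\ $\left\{a^{\pm1},b^{\pm1}\right\}$) are immediate. For the inductive step, Proposition \ref{prop:basis-prefix} says that $u$ or $u^{-1}$ is a prefix or a suffix of $v$. Replacing $u$ by $u^{-1}$ (still a CR basis) eliminates the $u$-versus-$u^{-1}$ dichotomy, and applying the reversal $w\mapsto\widetilde{w}$ interchanges ``prefix'' and ``suffix''; so we may assume $u$ is a prefix of $v$. Write $v=ut$; since $\left|u\right|\leq\left|v\right|$ and $u$ is a prefix, $u=v$ would force equality, so $t\neq1$ and $\left|u\right|<\left|v\right|$. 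Now $\left\{u,t\right\}$ is again a basis (obtained from $\left\{u,v\right\}$ by the Nielsen move $v\mapsto u^{-1}v$) of strictly smaller total length. It need not be CR, but applying Lemma \ref{lem:general-bases} to it yields a CR basis $\left\{u',t'\right\}$ in which $u'$ is a cyclic rotation of $u$ and $t'$ a cyclic rotation of the cyclically reduced core of $t=u^{-1}v$. By the induction hypothesis $u'$ and $t'$ are monotone; since monotonicity is a cyclic invariant, $u$ is monotone and the cyclic word of $t$ is monotone, so every cyclically reduced rotation of $t$ is monotone as a linear word.

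It remains to glue: to deduce that $v=ut$ is monotone from the monotonicity of $u$ and of the period $\tau$ underlying $t$. This is the crux. The point is that $u$ is the maximal loop-prefix of $v$, so the part of $v$ lying past $u$ is governed by the overlap word $m$ (the common segment of the maximal prefix and maximal suffix of $v$ that are paths in $\Gamma_{X}\left(\left\langle v\right\rangle\right)$, exactly as in the decomposition behind Proposition \ref{prop:basis-prefix}); here $m$ is a prefix of a power of $\tau$, so the tail letters carry the signs of the monotone word $\tau$ while the initial letters carry the signs of the monotone word $u$. These two sign assignments are forced to agree on the letters they share, because $m$ is simultaneously a segment of $u$ and of $\tau^{\infty}$; and in the degenerate situation where $m$ fails to pin down a sign, it is precisely the hypothesis that $v$ is \emph{cyclically reduced} that excludes the incompatible gluing (an incompatible gluing would make $v$ begin and end with mutually inverse letters, as in the non-CR word $aba^{-1}$). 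Assembling these observations, every letter of $v$ receives a single sign, so $v$ is monotone.

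The main obstacle is this final gluing step: the monotonicity of the two pieces is not by itself sufficient, since two monotone words can concatenate to a non-monotone one. One genuinely needs that they meet compatibly, which rests on two ingredients — the overlap word $m$ transmitting signs between $u$ and the period of $t$, and the cyclic reducedness of $v$ ruling out the one remaining degenerate configuration. Checking that these cover all cases, with particular care when $m$ is empty or omits one of the letters $a,b$, is where the real work lies.
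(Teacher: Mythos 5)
Your reduction to a CR basis $\{u,v\}$ with $u$ a prefix of $v$, and the passage to the shorter basis $\{u,t\}$ where $v=ut$, match the paper's strategy, but the argument breaks down at exactly the point you flag yourself: the gluing step is never carried out, and the preparatory claim feeding into it is handled incorrectly. You assert that $\{u,t\}$ ``need not be CR'' and propose to pass to the cyclically reduced core of $t$. In fact $t$ \emph{must} be CR (this is Corollary \ref{cor:shortening-CR-basis}; the paper proves it by a folding argument: if $t=rt'r^{-1}$ with $r\neq1$, then $\Gamma_{X}\left(\left\langle u,t\right\rangle\right)$ is a two-vertex graph and cannot equal the rose $\Gamma_{X}\left(\mathbf{F}_{2}\right)$). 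This is not an optional convenience: a non-CR word $rt'r^{-1}$ always contains some letter with both signs, so if $t$ could fail to be CR then $v=ut$ would fail to be monotone and the proposition would be false. An argument that tolerates a non-CR $t$ and still hopes to reassemble a monotone $v$ cannot be completed.

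More importantly, the concrete idea that closes the induction in the paper is absent from your sketch: since $\{u,t\}$ is again a \emph{CR} basis, one applies Proposition \ref{prop:basis-prefix} a \emph{second} time, to $\{u,t\}$. Assuming $\left|u\right|\leq\left|t\right|$ (the other case is symmetric), this says that $u$ or $u^{-1}$ is a prefix or a suffix of $t$; the options involving $u^{-1}$ are excluded because $v=ut$ is reduced and cyclically reduced. Hence $u$ is a literal subword of $t$, and the monotonicity of $v=ut$ follows at once from that of $t$ alone --- no sign-transmission analysis is needed. Your substitute for this step --- the overlap word $m$, the period $\tau$, and the ``degenerate configurations'' to be excluded by cyclic reducedness --- is a description of what would have to be verified rather than a verification, and you concede in your final paragraph that this is ``where the real work lies.'' As it stands, the proposal has the right inductive skeleton but is missing the two ingredients (CR-ness of $t$, and the second application of Proposition \ref{prop:basis-prefix}) that make the induction close.
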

\begin{proof}
Let $u$ be a CR primitive. By Lemma \ref{lem:general-bases}, possibly
applying some cyclic rotation to $u$, we can complete it to a CR
basis $\left\{ u,v\right\} $. We show that both $u$ and $v$ are
monotone, by induction on $\left|u\right|+\left|v\right|$. The base
case $\left|u\right|+\left|v\right|=2$ is trivial. Assume that $\left|u\right|\leq\left|v\right|$.
Using Proposition \ref{prop:basis-prefix}, and perhaps replacing
$u$, $v$, or both of them by their inverses (which does not affect
monotonicity), we can write $v=ut$. Now $\left\{ u,t\right\} $ is
a basis with $\left|u\right|+\left|t\right|<\left|u\right|+\left|v\right|$,
and we claim that $t$ is CR as well. Otherwise, $t=rt'r^{-1}$ with
$t'$ CR and $r\neq1$, and we have $\Gamma_{X}\left(\left\langle u,t\right\rangle \right)=\xymatrix@1@C=15pt{\otimes\ar[r]^{r}\ar@(dl,ul)[]^{u} & \bullet\ar@(dr,ur)[]_{t'}}$,
as $u\perp u^{-1}$ since $u$ is CR, $\left(t'\right)^{-1}\perp t'$
since $t'$ is, and all other relevant pairs since $v=urt'r^{-1}$
is. This, of course, contradicts $\left\langle u,t\right\rangle =\mathbf{F}_{2}$.

Therefore, by the induction hypothesis $u$ and $t$ are monotone.
Assume first that $\left|u\right|\leq\left|t\right|$. Since $v=ut$
is CR, $u^{-1}$ cannot be neither a prefix nor a suffix of $t$.
Thus, by Proposition \ref{prop:basis-prefix} $u$ must be a prefix
or a suffix of $t$, and in either case $v$ is monotone. The same
argument applies to the case $\left|u\right|>\left|t\right|$.
\end{proof}
We stress the following observation made in the proof:
\begin{cor}
\label{cor:shortening-CR-basis}Let $\left\{ u,v\right\} $ be a CR
basis of $\mathbf{F}_{2}$ with $u$ a prefix of $v$, and write $v=ut$.
Then $\left\{ u,t\right\} $ is again a CR basis.
\end{cor}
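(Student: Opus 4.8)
The plan is to reduce the statement to a single cyclic-reducedness check and then settle that check by a folding argument, exactly along the lines of the observation made inside the proof of Proposition \ref{prop:monotone}. First note that $\{u,t\}$ is automatically a basis: since $u$ is a prefix of $v$ the expression $v=ut$ is reduced and $t=u^{-1}v$, so $\{u,t\}$ arises from the basis $\{u,v\}$ by an elementary Nielsen transformation. Hence $\langle u,t\rangle=\mathbf{F}_{2}$, and by the bijection between subgroups and core graphs this forces $\Gamma_{X}\left(\langle u,t\rangle\right)=\Gamma_{X}\left(\mathbf{F}_{2}\right)=\xymatrix@1{\otimes\ar@(dl,ul)[]^{a}\ar@(dr,ur)[]_{b}}$, the rose on a single vertex. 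Since $u$ is CR by hypothesis, it only remains to prove that $t$ is CR, and a CR basis will follow.

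I would prove this by contradiction. Suppose $t$ is not CR and write $t=rt'r^{-1}$ with $r\neq1$ and $t'$ CR (here $t'\neq1$, as $t\neq1$). Form the lollipop graph $\Lambda=\xymatrix@1@C=15pt{\otimes\ar[r]^{r}\ar@(dl,ul)[]^{u} & \bullet\ar@(dr,ur)[]_{t'}}$, which satisfies $\pi_{1}^{X}\left(\Lambda\right)=\langle u,rt'r^{-1}\rangle=\langle u,t\rangle$. This graph has no leaves and meets the hypotheses of Lemma \ref{lem:no-trim}, so it folds into $\Gamma_{X}\left(\langle u,t\rangle\right)$ without trimming; the whole problem therefore becomes showing that no folding is possible in $\Lambda$, i.e.\ that $\Lambda$ is already the core graph. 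Using the $\perp$-notation from the proof of Proposition \ref{prop:basis-prefix}, the conditions $u\perp u^{-1}$ (at $\otimes$) and $t'\perp\left(t'\right)^{-1}$ (at $\bullet$) come from $u$ and $t'$ being CR, while $u^{-1}\perp r$, $r^{-1}\perp t'$ and $r^{-1}\perp\left(t'\right)^{-1}$ follow from the reducedness of the word $v=urt'r^{-1}$ at its three junctions.

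The main obstacle is the one remaining basepoint condition, $u\perp r$, which rules out a fold between the outgoing $u$-edge and the outgoing $r$-edge at $\otimes$; this is the only place where cyclic reducedness of $v$ (not mere reducedness) is used. The key computation is that $u_{1}=v_{1}$, because $u$ is a nonempty prefix of $v$, whereas the last letter of $v=urt'r^{-1}$ is the last letter of $r^{-1}$, namely $r_{1}^{-1}$; hence $v$ being CR yields $u_{1}\neq r_{1}$, that is $u\perp r$. With every $\perp$-condition verified, $\Lambda$ admits no folding and equals $\Gamma_{X}\left(\langle u,t\rangle\right)$. But $\Lambda$ has two vertices while $\Gamma_{X}\left(\mathbf{F}_{2}\right)$ has one, contradicting $\langle u,t\rangle=\mathbf{F}_{2}$. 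Therefore $t$ is CR, and $\{u,t\}$ is a CR basis.
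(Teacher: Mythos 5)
Your proof is correct and follows essentially the same route as the paper, which establishes this corollary inside the proof of Proposition \ref{prop:monotone} by showing that if $t=rt'r^{-1}$ with $r\neq1$ then the lollipop graph $\xymatrix@1@C=15pt{\otimes\ar[r]^{r}\ar@(dl,ul)[]^{u} & \bullet\ar@(dr,ur)[]_{t'}}$ is already the core graph of $\left\langle u,t\right\rangle$, contradicting $\left\langle u,t\right\rangle=\mathbf{F}_{2}$. Your write-up is in fact slightly more careful than the paper's ``all other relevant pairs'' remark, since you correctly isolate $u\perp r$ as the one condition requiring cyclic reducedness of $v$ (via $u_{1}=v_{1}$ and the last letter of $v$ being $r_{1}^{-1}$) rather than mere reducedness.
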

This leads to a constructive description of all CR bases of $\mathbf{F}_{2}$:
\begin{prop}
\label{prop:procedure-for-CR-bases}Any CR basis of $\mathbf{F}_{2}$
is obtained as follows: given a pair of positive co-prime integers
$\left(p,q\right)$, there is a unique sequence of pairs 
\begin{equation}
\left(p,q\right)=\left(p_{0},q_{0}\right),\left(p_{1},q_{1}\right),\ldots,\left(p_{\ell},q_{\ell}\right)=\left(1,1\right)\label{eq:euclides-numbers}
\end{equation}
which is the result of applying the Euclidean g.c.d.\ algorithm (i.e.\ if
$p_{i}<q_{i}$ then $p_{i+1}=p_{i}$ and $q_{i+1}=q_{i}-p_{i}$, and
vice-versa). Let $X_{\ell}=\left\{ u_{\ell},v_{\ell}\right\} $ be
one of the four bases $\left\{ a^{\pm1},b^{\pm1}\right\} $, and define
$X_{i}=\left\{ u_{i},v_{i}\right\} $ iteratively for $i=\ell-1\ldots0$
by 
\begin{equation}
\left(u_{i},v_{i}\right)=\begin{cases}
\left(u_{i+1}\hphantom{v_{i+1}}\:,\: v_{i+1}u_{i+1}\right) & p_{i}<q_{i}\\
\left(u_{i+1}v_{i+1}\:,\: v_{i+1}\hphantom{u_{i+1}}\right) & q_{i}<p_{i}.
\end{cases}\label{eq:straight-euclides}
\end{equation}
Finally, take $X_{0}$, conjugate its elements by any common prefix
or suffix (thus cyclically rotating both of them), and possibly replace
one of them by its inverse.\end{prop}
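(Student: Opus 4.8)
The plan is to prove the statement by strong induction on the total length $|u|+|v|$ of a CR basis $\{u,v\}$, using the shortening operation of Corollary \ref{cor:shortening-CR-basis} as the inductive descent; the pair $(p_i,q_i)$ will be nothing but the length pair $(|u_i|,|v_i|)$, so that a single subtractive Euclidean step corresponds to one shortening. I first observe that the procedure is well defined and always outputs a CR basis. Taking $X_\ell=\{a,b\}$, every word produced by the rules in (\ref{eq:straight-euclides}) is a positive word in $a,b$ (a concatenation of positive words), hence automatically reduced and cyclically reduced, and each step is a Nielsen transformation, so $X_i$ is a basis for all $i$; the other three choices of $X_\ell$, together with the final conjugation and inversion, then produce the remaining CR bases. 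Reading the rules off, one checks directly that $(|u_i|,|v_i|)=(p_i,q_i)$ throughout, and since $(1,1)$ is coprime and $\gcd(p,q)=\gcd(p,q-p)$, coprimality of the length pair is preserved along the whole sequence.

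For surjectivity — that every CR basis arises in this way — I argue by induction on $|u|+|v|$. In the base case $|u|+|v|=2$ both words have length one, forcing $\{u,v\}=\{a^{\pm1},b^{\pm1}\}$, which is the empty-descent case $(p,q)=(1,1)$ with $X_0=X_\ell$. For the inductive step, assume $|u|\le|v|$ and $|u|+|v|\ge 3$. I first note that $|u|<|v|$: if $|u|=|v|$, then Proposition \ref{prop:basis-prefix} would force $v\in\{u,u^{-1}\}$, contradicting that $\{u,v\}$ is a basis. By Proposition \ref{prop:basis-prefix}, $u$ or $u^{-1}$ is a prefix or a suffix of $v$; using the symmetries permitted by the final step of the procedure — inverting a single generator and conjugating the pair by a common prefix or suffix — I reduce to the canonical case in which $u$ is a suffix of $v$, say $v=wu$. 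By the suffix form of Corollary \ref{cor:shortening-CR-basis}, $\{u,w\}$ is again a CR basis, and $|u|+|w|=|v|<|u|+|v|$. The induction hypothesis realizes $\{u,w\}$ by a Euclidean sequence, and prepending the step with length pair $(|u|,|u|+|w|)=(|u|,|v|)$ is precisely the rule for $p_i<q_i$ in (\ref{eq:straight-euclides}), namely $(u_i,v_i)=(u_{i+1},v_{i+1}u_{i+1})=(u,wu)=(u,v)$. This recovers $\{u,v\}$ and completes the induction.

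The step I expect to be the main obstacle is the bookkeeping of symmetries in the reduction to canonical suffix form. Proposition \ref{prop:basis-prefix} offers four alternatives (prefix or suffix, with $u$ or with $u^{-1}$), whereas the rules in (\ref{eq:straight-euclides}) only ever place the shorter word as a suffix of the longer one, with no inversion; converting a prefix relation into a suffix one requires a simultaneous cyclic rotation (conjugation by the common prefix $u$), and the inverse cases require inverting a generator, so one must verify that all such adjustments accumulated along the descent collapse into the single ``conjugate by a common prefix or suffix, and possibly invert one element'' that the statement permits only at the very end. The cleanest way to handle this, I think, is to prove surjectivity up to the equivalence relation generated by these operations, and then check separately that each admissible choice of $X_\ell$ meets exactly one equivalence class, so that the final adjustment sweeps out the class. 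Throughout, monotonicity (Proposition \ref{prop:monotone}) is what guarantees that no cancellation occurs, keeping every intermediate word reduced and cyclically reduced.
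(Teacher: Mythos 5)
Your inductive skeleton coincides with the paper's: descend by Corollary \ref{cor:shortening-CR-basis}, with the length pair $\left(\left|u_i\right|,\left|v_i\right|\right)$ tracking the Euclidean pair $\left(p_i,q_i\right)$, and invoke Proposition \ref{prop:basis-prefix} at each stage. But the step you yourself flag as ``the main obstacle'' is not a formality to be deferred --- it is the actual mathematical content of the surjectivity half, and your proposal does not prove it. Two things go wrong as written. First, your induction hypothesis realizes the shortened basis $\left\{ u,w\right\} $ only \emph{up to} a final rotation and possible inversion, so you cannot simply prepend the step $\left(u,wu\right)$: you must commute the inherited adjustment past the prepended concatenation, and while a rotation by a common prefix/suffix does commute with concatenation, an inversion of one generator does not ($w=\left(v_{0}'\right)^{-1}$ gives $wu=\left(v_{0}'\right)^{-1}u_{0}'$, which is not a rotation of $\left(v_{0}'u_{0}'\right)^{\pm1}$). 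Second, proving surjectivity ``up to the equivalence relation generated by'' these operations establishes a weaker statement than the proposition, which permits exactly one rotation by a common prefix or suffix and at most one inversion, applied at the very end; the generated relation is a priori strictly larger, and your suggestion to check that each choice of $X_{\ell}$ meets one equivalence class addresses the wrong end of the chain.

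The paper closes this gap with two concrete observations that your sketch is missing. (a) A single inversion performed at the very start suffices: after it, every $x_{i},y_{i}$ is a positive product of the later pairs, and CR-ness of $y_{i}$ then forbids $x_{i}^{-1}$ from being a prefix or suffix of the complementary factor, so no further inversions are ever needed. (b) When the descent produces a prefix-decomposition $y_{m}=x_{m}t$ instead of the required suffix-decomposition, the repairing conjugator is the whole word $x_{m}$, and \emph{because $x_{0},y_{0}$ are positive words in $x_{m}$ and $y_{m}=x_{m}t$, both of which begin with $x_{m}$, this $x_{m}$ is automatically a common prefix of the original pair}; conjugating the original pair by it repairs step $m$ without disturbing earlier steps, and iterating shows the accumulated conjugator remains a single common prefix/suffix of the original $x_{0},y_{0}$. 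Without (a) and (b), or an equivalent argument, the reduction to ``canonical suffix form'' that your induction relies on is unjustified.
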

\begin{proof}
This construction certainly gives a CR basis of $\mathbf{F}_{2}$
(CR follows from monotonicity), and it remains to show that every
CR basis is thus obtained. This is done by reversing the process,
as follows. Let $\left\{ x_{0},y_{0}\right\} $ be a CR basis. Discarding
the trivial bases $\left\{ a^{\pm1},b^{\pm1}\right\} $, we can assume
(by inversion if necessary) that one of $x_{0}$ or $y_{0}$ is a
prefix/suffix of the other (by Proposition \ref{prop:basis-prefix}). 
\begin{lem*}
$x_{0},y_{0}$ can be rotated by a a common prefix/suffix, so that
a sequence of CR bases $\left\{ x_{1},y_{1}\right\} ,\ldots,\left\{ x_{\ell},y_{\ell}\right\} $
with the following properties is obtained:
\begin{enumerate}
\item For every $0\leq i\leq\ell-1$, the shorter of $x_{i},y_{i}$ is a
suffix of the longer one.
\item Each basis is obtained from the previous one by
\begin{equation}
\left(x_{i+1},y_{i+1}\right)=\begin{cases}
\left(x_{i},t\right) & \left|x_{i}\right|<\left|y_{i}\right|\:\mathrm{and}\: y_{i}=tx_{i}\\
\left(t,y_{i}\right) & \left|y_{i}\right|<\left|x_{i}\right|\:\mathrm{and}\: x_{i}=ty_{i}.
\end{cases}\label{eq:reverse-euclides}
\end{equation}

\item $\left\{ x_{\ell},y_{\ell}\right\} $ is one of the four bases $\left\{ a^{\pm1},b^{\pm1}\right\} $.\end{enumerate}
\begin{proof}[Proof of the Lemma]
If we do not perform the rotation of $x_{0},y_{0}$ by a common prefix/suffix,
the same holds, but with the exception that at each stage the shorter
among $x_{i},y_{i}$ may be a prefix of the longer one, and not a
suffix: this follows from by Proposition \ref{prop:basis-prefix},
and Corollary \ref{cor:shortening-CR-basis}, which ensures that all
of these bases are CR. Assume that the process first fails at step
$m$, i.e.\ the shorter among $x_{m},y_{m}$ is not a suffix of the
longer one, and assume for simplicity that $x_{m}$ is shorter, so
that $y_{m}=x_{m}t$. 

Since $x_{0},y_{0}$ are products of positive powers of $x_{m}$ and
$y_{m}=x_{m}t$, it follows that $x_{m}$ is a common prefix of both
of them. Therefore, $\left\{ x_{0}',y_{0}'\right\} =\left\{ x_{m}^{-1}x_{0}x_{m},x_{m}^{-1}y_{0}x_{m}\right\} $
is a cyclic rotation of $\left\{ x_{0},y_{0}\right\} $. Let $\left\{ x_{i}',y_{i}'\right\} _{i=0\ldots m}$
be the bases obtained by \prettyref{eq:reverse-euclides} from $\left\{ x_{0}',y_{0}'\right\} $.
Since the expression of $x_{i}',y_{i}'$ as words in $x_{m}',y_{m}'$
is the same as that of $x_{i},y_{i}$ as words in $x_{m},y_{m}$,
we still have that at every step until $m$ the shorter of $x_{i}',y_{i}'$
is a suffix of the longer one. In fact, $x_{i}'=x_{m}^{-1}x_{i}x_{m}$
and likewise for $y_{i}'$, for all $i\leq m$. Now, assertion \emph{(1)}
holds for step $m$ as well, as $x'_{m}=x_{m}$ and $y'_{m}=x_{m}^{-1}\left(x_{m}t\right)x_{m}=tx_{m}=tx_{m}'$. 

We continue in this manner: at the next step at which assertion \emph{(1)}
fails we replace $\left\{ x_{0}',y_{0}'\right\} $ by $\left\{ x_{0}'',y_{0}''\right\} $
which resolves that step, and note that $x_{0}'',y_{0}''$ is still
a cyclic rotation of the original $x_{0},y_{0}$ by a common prefix/suffix,
and that no new failures of \emph{(1)} were introduced by this change
for the previous steps. Repeating this for every failure of \emph{(1)}
guarantees that it hold throughout the process.
\end{proof}
\end{lem*}
We continue the proof of the proposition, assuming that $x_{0},y_{0}$
were inverted and rotated according to the Lemma, and $\left\{ x_{1},y_{1}\right\} ,\ldots,\left\{ x_{\ell},y_{\ell}\right\} $
are the bases obtained by \prettyref{eq:reverse-euclides}. The sequence
of integer pairs $\left(\left|x_{0}\right|,\left|y_{0}\right|\right),\ldots,\left(\left|x_{\ell}\right|,\left|y_{\ell}\right|\right)=\left(1,1\right)$
is then the sequence obtained by the Euclidean algorithm for $\left(\left|x_{0}\right|,\left|y_{0}\right|\right)$
(as in \prettyref{eq:euclides-numbers}), and in particular this shows
that $\left|x_{0}\right|,\left|y_{0}\right|$ are co-prime. Thus,
if one takes $\left(p,q\right)=\left(\left|x_{0}\right|,\left|y_{0}\right|\right)$
and $\left(u_{\ell},v_{\ell}\right)=\left(x_{\ell},y_{\ell}\right)$,
and follows \prettyref{eq:straight-euclides} as explained in the
statement of the proposition, the process in \prettyref{eq:reverse-euclides}
is reversed, and one obtains $\left(u_{0},v_{0}\right)=\left(x_{0},y_{0}\right)$.\end{proof}
\begin{cor}
For a CR basis $\left\{ u,v\right\} $, regard $u$ and $v$ as cyclic
words, and assume (by inverting $u$ if necessary) that one of them
is a subword of the other. Then one of $a,b$ always appears (in both
$u$ and $v$) with exponent $\varepsilon$ for some fixed $\varepsilon\in\left\{ 1,-1\right\} $,
and the other letter always appears with exponent $m$ or $m+1$ for
some $m\in\mathbb{Z}$.\end{cor}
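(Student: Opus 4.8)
The plan is to deduce this from the explicit description of CR bases in Proposition \ref{prop:procedure-for-CR-bases}, after first normalizing signs. Since the statement concerns only the cyclic words $u,v$ and is unchanged if we invert \emph{both} elements simultaneously (this sends $\varepsilon\mapsto-\varepsilon$ and the pair $\{m,m+1\}$ to $\{-m-1,-m\}$, again two consecutive integers), I would begin by reducing to the case in which $u$ and $v$ are the positive words produced by the procedure \eqref{eq:straight-euclides} starting from $\{a,b\}$. The final cyclic rotation permitted in Proposition \ref{prop:procedure-for-CR-bases} does not change the cyclic syllable structure, and the single inversion it allows, combined with the corollary's own freedom to invert $u$, is absorbed by the global-inversion symmetry just noted. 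In this all-positive situation Proposition \ref{prop:basis-prefix} forces $u$ (and not $u^{-1}$) to be a prefix or a suffix of the longer word $v$, so that $u$ is genuinely a cyclic factor of $v$, exactly as in the hypothesis.

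Next, writing each positive word cyclically as an alternating concatenation of maximal $a$-runs and $b$-runs (legitimate by the monotonicity of Proposition \ref{prop:monotone}), I would call a letter \emph{isolated} if each of its runs has length $1$, and prove by induction on $\left|u\right|+\left|v\right|$ the strengthened claim that in a positive CR basis one letter is isolated in both cyclic words while the runs of the other letter all have length $m$ or $m+1$. The small cases (one of the words being a single letter, e.g.\ $\{a,b\}$ or $\{a,ab\}$) are immediate. For the inductive step I would invoke Corollary \ref{cor:shortening-CR-basis}: assuming $\left|u\right|<\left|v\right|$ and, up to the length-preserving mirror symmetry $w\mapsto\overleftarrow{w}$, that $u$ is a prefix of $v$, I write $v=ut$, so that $\{u,t\}$ is a shorter CR basis to which the induction hypothesis applies, and recover $\{u,v\}=\{u,ut\}$ by a single concatenation.

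The heart of the argument, and the step I expect to be the main obstacle, is controlling what happens to the run structure when $u$ is appended to $t$ and the product is re-read as a cyclic word. I expect that every merging of runs---whether at the internal seam between $t$ and $u$ or across the cyclic wrap-around of $v$---takes place in the \emph{non-isolated} letter, lengthening one of its runs by exactly one and keeping it within $\{m,m+1\}$, whereas the isolated letter never occupies such a seam and so retains runs of length $1$. Making this precise requires the induction hypothesis to carry extra bookkeeping beyond the cyclic run-length multiset, namely the first and last letters (equivalently, the boundary syllables) of the \emph{linear} words $u$ and $t$ at each stage; one must check that these boundary letters evolve so that the isolated letter is never doubled and that the two partial runs of the non-isolated letter straddling the cyclic seam of $v$ recombine to a length again lying in $\{m,m+1\}$. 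This boundary analysis is the only delicate point; once it is in place the corollary follows by unwinding the reduction to the positive, unrotated normal form.
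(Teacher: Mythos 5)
Your overall strategy---normalize to the positive words produced by Proposition \ref{prop:procedure-for-CR-bases} and then control the cyclic run structure---is essentially the right one, and your reduction to the positive, unrotated case is fine. But the proof is not complete: the entire combinatorial content of the corollary lies in the seam analysis that you explicitly defer (``I expect that every merging of runs takes place in the non-isolated letter\dots'', ``one must check that these boundary letters evolve so that\dots''). As you yourself observe, the induction hypothesis you actually state (the multiset of cyclic run lengths of $u$ and $t$, plus unspecified ``boundary syllables'') is not pinned down enough to carry the step $v=ut$: to control the internal seam of $ut$ and the new wrap-around seam of $v$ you must know the exact lengths of the partial runs at both ends of the \emph{linear} words at every stage, and you never say what these are or why they are preserved. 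Without that invariant, the claim that the two partial runs straddling a seam ``recombine to a length again lying in $\left\{ m,m+1\right\} $'' is an assertion, not a proof; a priori the trailing partial run of $t$ plus the leading partial run of $u$ could exceed $m+1$, or the isolated letter could get doubled.

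The missing invariant is exactly what the paper's own (much shorter) argument isolates, and I recommend you adopt it. In the construction of Proposition \ref{prop:procedure-for-CR-bases}, let $r$ be the number of steps before the option taken in \eqref{eq:straight-euclides} first switches; at that point the basis is $X_{\ell-r-1}=\left\{ aba^{r},ba^{r}\right\} $, and every later basis in the construction---i.e.\ every $X_{i}$ with $i\leq\ell-r-1$, in particular $X_{0}=\left\{ u,v\right\} $---consists of \emph{positive concatenations of the two blocks} $aba^{r}$ and $ba^{r}$. This block property is trivially preserved by the concatenation step (no run-length bookkeeping needed), and it immediately implies the corollary: reading any such concatenation cyclically, each block contributes one $b$ followed by $a^{r}$, with at most one extra $a$ prepended by the following block, so every $b$-exponent is $1$ and every $a$-exponent is $r$ or $r+1$. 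Replacing your run-length invariant by this single structural statement turns your inductive step into one line and closes the gap.
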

\begin{proof}
Let $X_{i}=\left\{ u_{i},v_{i}\right\} $ ($0\leq i\leq\ell$) be
the bases constructed in Proposition \prettyref{prop:procedure-for-CR-bases}
to give $X_{0}=\left\{ u,v\right\} $. Assume, for simplicity, that
$X_{\ell}=\left\{ u_{\ell},v_{\ell}\right\} =\left\{ a,b\right\} $,
and that in the first step the first option in \prettyref{eq:straight-euclides}
holds, so that $X_{\ell-1}=\left\{ a,ba\right\} $. Let $r$ be the
number of times the first option in \prettyref{eq:straight-euclides}
holds before it fails, i.e.\ $X_{\ell-2}=\left\{ a,ba^{2}\right\} ,\ldots,X_{\ell-r}=\left\{ a,ba^{r}\right\} $
(possibly $r=1$). If $r=\ell$ then the statement holds. Otherwise,
$X_{\ell-r-1}=\left\{ aba^{r},ba^{r}\right\} $, and now every cyclic
word which is a product of the elements of $X_{\ell-r-1}$ (with positive
exponents only) clearly satisfies the statement of the corollary with
$\varepsilon=1$ and $m=r$. Since $u$ and $v$ are such words, we
are done. 
\end{proof}

\section{The counterexample\label{sec:The-counterexample}}

Let $X=\left\{ a,b\right\} $ and $\mathbf{F}_{2}=\mathbf{F}\left(X\right)$.
In this section we prove the following:
\begin{prop}
\label{prop:counterexample}Let $H=\left\langle a^{2}b^{2}\right\rangle $,
and $J=\left\langle a^{2}b^{2},ab\right\rangle $. Then $H\leq_{{\scriptscriptstyle \overset{\twoheadrightarrow}{Y}}}J$
for every basis $Y$ of $\mathbf{F}_{2}$, but $J$ is not an algebraic
extension of $H$.\end{prop}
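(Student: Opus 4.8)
My plan splits the proposition into its two clauses. The non-algebraicity of $J$ over $H$ is the easy half and I treat it first; the assertion that $H$ $Y$-covers $J$ for every basis $Y$ is the heart of the matter. For non-algebraicity I would compute $\Gamma_{X}\left(J\right)$ for $X=\left\{ a,b\right\}$ directly: starting from the two loops reading $ab$ and $a^{2}b^{2}$ at a common basepoint and folding, one obtains a graph with three vertices and four edges, hence of rank $\left|E\right|-\left|V\right|+1=2$. Since $J$ is generated by the two elements $ab$ and $a^{2}b^{2}$ and is free of rank $2$, these elements form a basis of $J$, so $H=\left\langle a^{2}b^{2}\right\rangle$ is a proper free factor of $J$ (with complement $\left\langle ab\right\rangle$). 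Taking $M=H$ in the definition of an algebraic extension exhibits an intermediate $H\leq M\lneq J$ that is a proper free factor of $J$, whence $J$ is not an algebraic extension of $H$.

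For the covering clause I would start from Lemma \ref{lem:covers-iff-appears}. As $J=\left\langle H,ab\right\rangle$, the subgroup $H$ $Y$-covers $J$ if and only if $ab$ appears in $\Gamma_{Y}\left(\left\langle a^{2}b^{2}\right\rangle \right)$, so the task is to prove this appearance for every basis $Y$. Relabeling $Y$ to $X$ by the automorphism carrying one basis to the other turns this into a single statement about the fixed basis $X$: writing $a^{2}b^{2}=\left(ab\right)b^{-1}\left(ab\right)b$ and setting $T=\psi\left(ab\right)$, $S=\psi\left(b\right)$ for the relabeling automorphism $\psi$, one gets $\psi\left(a^{2}b^{2}\right)=TS^{-1}TS$, and the claim becomes that $T$ appears in $\Gamma_{X}\left(\left\langle TS^{-1}TS\right\rangle \right)$ for every ordered basis $\left(T,S\right)$ of $\mathbf{F}_{2}$ (since $\left\{ ab,b\right\}$ is a basis, $\left(T,S\right)$ ranges over all ordered bases).

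To prove this I would exhibit a reduced factorization $T=p_{1}p_{2}$ with $p_{1}$ a path from the basepoint of the lollipop $\Gamma_{X}\left(\left\langle TS^{-1}TS\right\rangle \right)$ and $p_{2}$ a path into it. In the benign case where $TS^{-1}TS$ is already reduced and cyclically reduced as a word in $X\cup X^{-1}$, the first $T$-arc is an initial segment of the cycle and $p_{1}=T$, $p_{2}=1$ works at once (equivalently, $T$ appears in the one-loop graph reading $TS^{-1}TS$, which satisfies the hypotheses of Lemma \ref{lem:no-trim}, so the appearance survives the foldings that produce $\Gamma_{X}$). The difficulty lies entirely in cancellation: when $TS^{-1}TS$ is not reduced, folding the loop also requires trimming and the arc reading $T$ is partly consumed, so the basepoint of the lollipop sits strictly inside the letters of $T$ and one must choose a genuine split—reading part of $T$ forward around the cycle and the complementary suffix back into the basepoint—guided by the two relations $TS^{-1}TS=T\cdot\left(S^{-1}TS\right)$ and $T=\left(TS^{-1}TS\right)\cdot\left(S^{-1}T^{-1}S\right)$.

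The main obstacle, then, is to control the amount and location of cancellation in $TS^{-1}TS$ uniformly over all bases, equivalently to determine the shape of the lollipop $\Gamma_{X}\left(\left\langle TS^{-1}TS\right\rangle \right)$, and this is exactly where Section \ref{sec:Primitives-in-f_2} enters. Using Lemma \ref{lem:general-bases} I would first conjugate away the common prefix and reduce to a cyclically reduced basis, and then use the monotonicity of cyclically reduced primitives (Proposition \ref{prop:monotone}) together with the prefix/suffix structure of Proposition \ref{prop:basis-prefix} to pin down precisely how $T$ and $S$ overlap, hence how much cancellation occurs and where the basepoint lands among the letters of $T$. I expect the argument to divide into the same cases as Proposition \ref{prop:basis-prefix} ($T$ a prefix or a suffix of $S$ or of $S^{-1}$, together with the position of the overlap), in each of which the explicit forms of $T$ and $S$ make the required split $T=p_{1}p_{2}$ visible; I anticipate this casework, rather than any single conceptual step, to be the technical core of the proof.
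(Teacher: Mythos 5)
Your setup is sound and coincides with the paper's: the non\-/algebraicity half is correct (the paper simply observes $J=H*\left\langle ab\right\rangle$; your rank count of $\Gamma_{X}\left(J\right)$ plus the fact that a two-element generating set of a rank-two free group is a basis reaches the same conclusion), and your reduction of the covering half --- via Lemma \ref{lem:covers-iff-appears} and transport by an automorphism --- to the single claim that $uv$ appears in $\Gamma_{X}\left(\left\langle u^{2}v^{2}\right\rangle \right)$ for every ordered basis is exactly the paper's reduction (your $\left(T,S\right)=\left(\psi\left(ab\right),\psi\left(b\right)\right)$ is just the reparametrization $\left(uv,v\right)$ of the paper's $\left(u,v\right)$; note that in your parametrization the concatenation $TS^{-1}$ always cancels, so your ``benign case'' is essentially empty, whereas in the $\left(u,v\right)$ form it is the genuinely easy case).

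The gap is that everything after this reduction is a plan rather than a proof, and the plan is missing one idea that is actually needed. The appearance of $uv$ in $\Gamma_{X}\left(\left\langle u^{2}v^{2}\right\rangle \right)$ for all bases is the bulk of the paper (Lemmas \ref{lem:none-CR}--\ref{lem:one-CR}, the last with six sub-cases), and you have not carried out any of that casework. More importantly, your step ``conjugate away the common prefix and reduce to a cyclically reduced basis'' does not by itself transfer the conclusion back to the original basis: if $\overline{u}=wuw^{-1}$, $\overline{v}=wvw^{-1}$, then $\Gamma_{X}\left(\left\langle \overline{u}^{2}\overline{v}^{2}\right\rangle \right)$ is obtained by gluing a $w$-labeled path to the basepoint of $\Gamma_{X}\left(\left\langle u^{2}v^{2}\right\rangle \right)$ and then folding, and that folding can in principle eat into the core graph and destroy the paths witnessing the appearance of $uv$. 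The paper handles this by proving a \emph{strengthened} statement in the CR and one-CR cases --- that in addition $u_{1}$ or $\left(v^{-1}\right)_{1}$ emanates from the basepoint --- which guarantees (via Lemma \ref{lem:no-trim}) that no folding occurs at the gluing vertex; this is condition \emph{(2)} of Lemma \ref{lem:none-CR}, and without formulating it your conjugation step does not close. A further wrinkle: the paper deliberately stops the reduction at a basis with \emph{one} CR element (conjugating only by the shorter of $x,y$ from Lemma \ref{lem:general-bases}), because the full conjugator $xw$ need not be a common prefix of both $\overline{u}$ and $\overline{v}$, so reducing all the way to a CR basis as you propose would require an extra argument.
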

\begin{proof}
First, as $H$ is a free factor of $J$ (since $J=H*\left\langle ab\right\rangle $),
it is clear that $J$ is not an algebraic extension of $H$, and it
is left to show that $H$ covers $J$ with respect to every basis
$Y=\left\{ u,v\right\} $. For any automorphism $\varphi$ of $\mathbf{F}_{2}$,
$H\covers J$ iff $\varphi\left(H\right)$ covers $\varphi\left(J\right)$
w.r.t.\ the basis $\varphi\left(X\right)=\left\{ \varphi\left(a\right),\varphi\left(b\right)\right\} $.
As $\varphi\left(X\right)$ achieves all bases of $\mathbf{F}_{2}$,
what we seek to show is equivalent to the assertion that $\left\langle u^{2}v^{2}\right\rangle \covers\left\langle u^{2}v^{2},uv\right\rangle $
for every basis $\left\{ u,v\right\} $. 

By Lemma \ref{lem:covers-iff-appears}, showing that $\left\langle u^{2}v^{2}\right\rangle $
$X$-covers $\left\langle u^{2}v^{2},uv\right\rangle $ is equivalent
to verifying that $uv$ appears in $\Gamma_{X}\left(\left\langle u^{2}v^{2}\right\rangle \right)$.
For the case where $u$ and $v$ are CR this is shown in Lemma \ref{lem:both-CR},
and the case where only one of them is CR is handled in Lemma \ref{lem:one-CR}.
For the general case, let $Y=\left\{ \overline{u},\overline{v}\right\} $
be the base at hand, and write $\overline{u}=xux^{-1}$ and $\overline{v}=yvy^{-1}$
with $u,v$ CR. By Lemma \ref{lem:general-bases}, $x$ is a prefix
of $y$, or vice-versa. Thus, if $w$ is the shorter among $x,y$
then $w^{-1}Yw$ is a basis with one CR element, which was already
handled. Inferring from this the result for the original $Y$ is done
in Lemma \ref{lem:none-CR}. For this we need an additional technical
assumption on $\Gamma_{X}\left(\left\langle w^{-1}\overline{u}^{2}\overline{v}^{2}w\right\rangle \right)$,
which is seen to hold in Lemmas \ref{lem:both-CR} and \ref{lem:one-CR}.\end{proof}
\begin{lem}
\label{lem:none-CR}Let $\left\{ \overline{u},\overline{v}\right\} $
be a basis of $\mathbf{F}_{2}$ such that $\overline{u}$ and $\overline{v}$
share a common prefix $w$ and a common suffix $w^{-1}$, and write
$\overline{u}=wuw^{-1}$ and $\overline{v}=wvw^{-1}$. If
\begin{enumerate}
\item $uv$ appears in $\Gamma_{X}\left(\left\langle u^{2}v^{2}\right\rangle \right)$,
and
\item either $u_{1}$ or $\left(v^{-1}\right)_{1}$ emanates from the basepoint
of $\Gamma_{X}\left(\left\langle u^{2}v^{2}\right\rangle \right)$,
\end{enumerate}

then \textup{$\overline{u}\overline{v}$ appears in} $\Gamma_{X}\left(\left\langle \overline{u}^{2}\overline{v}^{2}\right\rangle \right)$.

\end{lem}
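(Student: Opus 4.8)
The plan is to reduce everything to a conjugation statement about $\Gamma_X(\langle u^2v^2\rangle)$. Cancelling the inner $w^{-1}w$ factors gives the element-level identities $\overline u\,\overline v = w(uv)w^{-1}$ and $\overline u^2\overline v^2 = wu^2v^2w^{-1}$, hence $\langle\overline u^2\overline v^2\rangle = w\langle u^2v^2\rangle w^{-1}$. First I would record that $w(uv)w^{-1}$ is reduced as written: its only dangerous junctions are between the last letter of $w$ and $u_1$, and between the last letter of $v$ and the first letter of $w^{-1}$, and both are non-cancelling precisely because $\overline u = wuw^{-1}$ and $\overline v = wvw^{-1}$ are reduced; the inner junction of $uv$ is reduced because, by hypothesis (1), $uv$ appears in a graph and is thus a reduced word.

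Geometrically, I would realise $\Gamma_X(\langle\overline u^2\overline v^2\rangle)$ by prepending a $w$-handle to $\Gamma_0 := \Gamma_X(\langle u^2v^2\rangle)$: attach to the basepoint $o$ of $\Gamma_0$ a simple path spelling $w$, and declare its far endpoint $o'$ to be the new basepoint, obtaining a graph $\Gamma$ with $\pi_1^X(\Gamma) = w\,\pi_1^X(\Gamma_0)\,w^{-1} = \langle\overline u^2\overline v^2\rangle$. By Lemma \ref{lem:no-trim} (or directly), $\Gamma_X(\langle\overline u^2\overline v^2\rangle)$ is obtained from $\Gamma$ by folding and trimming, and the key claim is that neither is ever needed. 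The interior handle-vertices have degree two with two distinct edge-types (as $w$ is reduced), and $o'$ is the basepoint, so the only place anything could happen is at $o$, where the handle's terminal edge enters $o$ carrying the last letter $w_k$ of $w$ and could in principle fold with an existing edge terminating at $o$.

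The hard part is to rule out this single fold, and this is exactly where the hypotheses are spent. Because $\langle u^2v^2\rangle$ is cyclic, $\Gamma_0$ is a lollipop (a simple cycle with a simple stem based at $o$), so $o$ has at most one incoming edge. When $u^2v^2$ is not cyclically reduced, $o$ is the tip of the stem and has no incoming edge, so no fold can occur; when $u^2v^2$ is cyclically reduced, the unique incoming edge at $o$ is labelled by the last letter of $v$, which is not $w_k$ because $vw^{-1}$ is reduced. In either case the handle's terminal edge has nothing to fold against, so $\Gamma$ is already reduced and untrimmed and equals $\Gamma_X(\langle\overline u^2\overline v^2\rangle)$; in particular $\Gamma_0$ embeds in it as the subgraph reached from $o'$ by reading $w$. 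Condition (2) is the clean packaging of this local picture — it records which edge-types emanate from $o$ (the outgoing $u_1$, always present, and the backward reading $(v^{-1})_1$ of the incoming edge carrying the last letter of $v$) — and it is precisely the statement that is checked on the explicit graphs constructed in Lemmas \ref{lem:both-CR} and \ref{lem:one-CR}.

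With no folding, the conclusion drops out. Using hypothesis (1), write $uv = p_1p_2$ with $p_1$ starting and $p_2$ ending at $o$ (Definition \ref{def:appear}), and set $p_1' = w p_1$ and $p_2' = p_2 w^{-1}$, each extended along the handle. Then $p_1'$ starts at $o'$, $p_2'$ ends at $o'$, and $p_1'p_2' = w(uv)w^{-1} = \overline u\,\overline v$; the two new junctions — between the handle and $p_1$, and between $p_2$ and the handle — are non-backtracking by the reducedness of $\overline u$ and $\overline v$ noted at the start. Hence $\overline u\,\overline v$ appears in $\Gamma_X(\langle\overline u^2\overline v^2\rangle)$, as required.
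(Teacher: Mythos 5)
Your setup (prepend a $w$-handle to $\Gamma_{X}\left(\left\langle u^{2}v^{2}\right\rangle \right)$ and move the basepoint to the far end $o'$) is exactly the paper's, but your key claim --- that the resulting graph $\Gamma$ is already folded, so that $\Gamma=\Gamma_{X}\left(\left\langle \overline{u}^{2}\overline{v}^{2}\right\rangle \right)$ --- is false, and the argument you give for it conflates the reading direction of an edge with its actual direction. A fold at the gluing vertex $o$ merges two edges with the same label and the same origin (or the same terminus); the handle's last edge is an edge of type $\left(w^{-1}\right)_{1}=w_{k}^{-1}$ emanating from $o$, and it folds with any edge of $\Gamma_{0}=\Gamma_{X}\left(\left\langle u^{2}v^{2}\right\rangle \right)$ of that same emanating type. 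The emanating types at the basepoint of the lollipop $\Gamma_{0}$ are $g_{1}$ and $\left(g^{-1}\right)_{1}$, where $g$ is the \emph{reduced form} of $u^{2}v^{2}$ --- and these need not equal $u_{1}$ and $\left(v^{-1}\right)_{1}$, which is the only pair that the reducedness of $\overline{u},\overline{v}$ controls (this is also why assumption \emph{(2)} is an ``either/or'' rather than your ``the outgoing $u_{1}$, always present''). Concretely, take $u=a$, $v=a^{-2}b$, $w=b^{-1}$, so $\overline{u}=b^{-1}ab$ and $\overline{v}=b^{-1}a^{-2}b^{2}$: both hypotheses of the lemma hold, $u^{2}v^{2}=ba^{-2}b$, the cycle $\Gamma_{0}$ has an outgoing $b$-edge at $o$, the handle is a $b$-edge from $o$ to $o'$, and these two edges fold. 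So both of your subcases (``no incoming edge at the tip of the stem'' and ``the unique incoming edge is labelled by the last letter of $v$'') overlook the outgoing edge that the handle can collide with, and your final paragraph, which reads the paths off $\Gamma$ on the assumption that $\Gamma$ is the core graph, does not go through as written.

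The gap is repairable, and the repair is the paper's actual proof: one does not need to rule out folding, only trimming. Assumption \emph{(2)} together with $w^{-1}\perp u,v^{-1}$ (which follows from $\overline{u},\overline{v}$ being reduced) shows that $o$ carries at least two distinct emanating edge-types in $\Gamma$, so Lemma \ref{lem:no-trim} applies and $\Gamma_{X}\left(\left\langle \overline{u}^{2}\overline{v}^{2}\right\rangle \right)$ is obtained from $\Gamma$ by foldings alone. Since ``appearing'' in the sense of Definition \ref{def:appear} is preserved by any basepoint-preserving labeled morphism, in particular by each folding step, the appearance of $\overline{u}\,\overline{v}=w\left(uv\right)w^{-1}$ in $\Gamma$ --- which you do establish correctly from hypothesis \emph{(1)} --- passes to the core graph. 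In the example above folding genuinely occurs, yet $\overline{u}\,\overline{v}=b^{-1}a^{-1}b^{2}$ still appears in $\Gamma_{X}\left(\left\langle a^{-2}b^{2}\right\rangle \right)$, as the lemma asserts.
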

\begin{proof}
Observe the graph $\Gamma$, which is obtained by attaching a path
labeled by $w$ to the basepoint of $\Gamma_{X}\left(\left\langle u^{2}v^{2}\right\rangle \right)$
and moving the basepoint to the origin of the $w$-path:\[\xy(0,0)*{\otimes}="bp"+(6,6)*{\Gamma_{X}\left(\left\langle u^{2}v^{2}\right\rangle \right)};	 (5,3)*\xycircle(15,10){--};  \endxy \qquad\raisebox{8pt}{$\Longrightarrow$}\qquad
\xy(0,0)*{\bullet}="bp"+(-12,10)*{\Gamma};
(5,3)*\xycircle(15,10){--};
(-18,-4)*{\otimes}="s1";
"s1";"bp" **\crv{ (-12,-14) & (-6,4)} ?(.43)*\dir{>} +(-2,2)*{w};
\endxy\]The graph $\Gamma$ folds into $\Gamma_{X}\left(\pi_{1}^{X}\left(\Gamma\right)\right)=\Gamma_{X}\left(\left\langle \overline{u}^{2}\overline{v}^{2}\right\rangle \right)$,
since if satisfies the conditions of Lemma \ref{lem:no-trim}: the
only vertex that needs checking is the gluing place, and there the
conditions hold by assumption \emph{(2)} and the fact that $w^{-1}\perp u,v^{-1}$
(as $\overline{u},\overline{v}$ are reduced). Finally, since $uv$
appears in $\Gamma_{X}\left(\left\langle u^{2}v^{2}\right\rangle \right)$,
$\overline{uv}=wuvw^{-1}$ appears in $\Gamma$, and thus also in
its folding $\Gamma_{X}\left(\left\langle \overline{u}^{2}\overline{v}^{2}\right\rangle \right)$.\end{proof}
\begin{lem}
\label{lem:both-CR}If $\left\{ u,v\right\} $ is a CR basis of $\mathbf{F}_{2}$
then
\begin{enumerate}
\item $uv$ appears in $\Gamma_{X}\left(\left\langle u^{2}v^{2}\right\rangle \right)$,
and
\item $u_{1}$ or $\left(v^{-1}\right)_{1}$ emanates from the basepoint
of $\Gamma_{X}\left(\left\langle u^{2}v^{2}\right\rangle \right)$.
\end{enumerate}
\end{lem}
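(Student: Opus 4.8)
My plan is to reduce both assertions to reading a suitable prefix and a suitable suffix along the single reduced loop that spells $u^{2}v^{2}$ in $\Gamma_{X}\left(\left\langle u^{2}v^{2}\right\rangle \right)$. First I would record the cancellation occurring in the product $uv$: let $d$ be the longest word with $u=u_{0}d$ and $v=d^{-1}v_{0}$, so that $uv=u_{0}v_{0}$ is reduced (by maximality of $d$, the last letter of $u_{0}$ does not cancel against the first letter of $v_{0}$). Here $u_{0}$ is a prefix of $u$ and $v_{0}$ is a suffix of $v$, and the heart of the argument is to upgrade this to: $u_{0}$ is a prefix, and $v_{0}$ a suffix, of the \emph{reduced} word $u^{2}v^{2}$ itself.

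The key structural step uses that $u$ and $v$ are CR. In $u^{2}v^{2}=u\cdot u\cdot v\cdot v$, the $u\cdot u$ and $v\cdot v$ junctions are reduced precisely because $u$ and $v$ are cyclically reduced, so the only place free reduction can begin is the central junction between the second $u$ and the first $v$. A short analysis of this central cancellation shows that the first copy of $u$ survives the reduction — hence $u$, and so $u_{0}$, is a prefix of the reduced word $u^{2}v^{2}$ — unless $v$ begins with $u^{-1}$, i.e.\ unless $u_{0}$ is empty; symmetrically the last copy of $v$ survives, making $v_{0}$ a suffix, unless $v_{0}$ is empty. Since $\Gamma_{X}\left(\left\langle u^{2}v^{2}\right\rangle \right)$ is just the loop at $\otimes$ spelling this reduced word (a simple cycle when $u^{2}v^{2}$ is cyclically reduced, a cycle with a tail otherwise), reading $u_{0}$ forward from $\otimes$ traces a reduced path $p_{1}$, and reading $v_{0}$ into $\otimes$ traces a reduced path $p_{2}$.

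Assertion \emph{(1)} is then immediate from Definition \ref{def:appear}: $p_{1}p_{2}=u_{0}v_{0}=uv$ is reduced, $p_{1}$ starts at $\otimes$ and $p_{2}$ terminates at $\otimes$, so $uv$ appears in $\Gamma_{X}\left(\left\langle u^{2}v^{2}\right\rangle \right)$. For \emph{(2)} I would read off the two edges at $\otimes$: the outgoing edge of the loop carries the first letter of the reduced word $u^{2}v^{2}$, which is $u_{1}$ whenever the first copy of $u$ survives; the incoming edge carries its last letter, whose reverse is $\left(v^{-1}\right)_{1}$ whenever the last copy of $v$ survives. Because $\left\{ u,v\right\} $ is a basis we have $v\neq u^{-1}$, so $u_{0}$ and $v_{0}$ cannot both be empty, and at least one of these two edges supplies the required letter.

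The main obstacle — and the reason \emph{(2)} is stated as a disjunction rather than simply ``$u_{1}$ emanates'' — is exactly the absorption case $u_{0}$ empty (or $v_{0}$ empty): there the central cancellation swallows a whole copy of $u$ (resp.\ $v$), the reduced word need no longer begin with $u_{1}$, and one must fall back on the surviving last copy of $v$ to produce $\left(v^{-1}\right)_{1}$. Keeping track of which copy survives, and checking that the tail (present when $u^{2}v^{2}$ is not cyclically reduced) does not obstruct reading $u_{0}$ and $v_{0}$ off the loop, is the only delicate bookkeeping; I note that the whole argument uses only that $u$ and $v$ are cyclically reduced, and not the full strength of $\left\{ u,v\right\} $ being a basis.
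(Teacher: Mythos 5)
Your overall plan --- identify $\Gamma_{X}\left(\left\langle u^{2}v^{2}\right\rangle \right)$ with the loop (or lollipop) spelling the reduced form $R$ of $u^{2}v^{2}$, and read $u_{0}$ off as a prefix of $R$ and $v_{0}$ as a suffix --- is a reasonable route, and the reduction of both assertions to the claim ``$u_{0}$ is a prefix of $R$ and $v_{0}$ is a suffix of $R$'' is sound. But the step you offer in support of that claim is false as stated. You assert that the first copy of $u$ survives the reduction of $u^{2}v^{2}$ unless $u_{0}$ is empty (and symmetrically for the last copy of $v$). Take $u=a^{-1}b^{-1}a^{-2}$ and $v=a^{2}b$: both are CR, and $\left\{ u,v\right\} $ is a basis since $uv=a^{-1}$ and hence $b=a^{-2}v\in\left\langle u,v\right\rangle $. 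Here $d=b^{-1}a^{-2}$, so $u_{0}=a^{-1}\neq1$ and $v_{0}=1$; yet the reduced form of $u^{2}v^{2}$ is $a^{-1}b^{-1}a^{-1}b$, which does not begin with $u$. The point is that when one of $u_{0},v_{0}$ is empty the central cancellation does not stop after absorbing one whole factor: having consumed $v^{-1}$ from the tail of the second $u$, it keeps propagating between the truncated $u^{2}$ and the second copy of $v$ (in the example it eats $5$ letters from each side, more than $\left|u\right|$). So ``which copy survives'' is not governed by the emptiness of $u_{0}$ and $v_{0}$, and your disjunction in \emph{(2)} is indexed by the wrong dichotomy.

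The claim you actually need is weaker and is still true: if $c$ denotes the total central cancellation, then $c\leq\min\left(2\left|u\right|,2\left|v\right|\right)$, and this forces $c<\left|u\right|+\left|v\right|$ (for $\left|u\right|\neq\left|v\right|$ this is automatic, and for $\left|u\right|=\left|v\right|$ equality would give $u^{2}v^{2}=1$, i.e.\ $v=u^{-1}$); from $c<\left|u\right|+\left|v\right|$ one checks that the first $\left|u_{0}\right|$ letters of $u^{2}$ and the last $\left|v_{0}\right|$ letters of $v^{2}$ always survive into $R$, and that $R$ begins with $u_{1}$ unless $u^{2}$ is wholly cancelled, in which case $R$ is a nonempty suffix of $v^{2}$ and its last letter supplies $\left(v^{-1}\right)_{1}$. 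Supplying this bound and the ensuing bookkeeping is exactly the content you deferred, and it is precisely what the paper's proof does carry out: in the case $u^{-1}$ a prefix of $v$ it writes $v=u^{-1}t$, introduces the further common prefix $m$ of $u^{-1}$ and $tu^{-1}$ to track the propagating cancellation, and verifies via Lemma \ref{lem:no-trim} that the resulting graph folds (without trimming) onto the core graph while the relevant paths persist. As written, your argument has a genuine gap at its central step; your closing remark that only cyclic reducedness is used is essentially right, but only for the corrected version of the claim, not for the one you prove.
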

\begin{proof}
If $\left|u\right|+\left|v\right|=2$ then the claims hold, so assume
that $\left|u\right|+\left|v\right|\geq3$. Since $\Gamma_{X}\left(\left\langle u^{2}v^{2}\right\rangle \right)=\Gamma_{X}\left(\left\langle v^{-2}u^{-2}\right\rangle \right)$
and $\Gamma_{X}\left(\left\langle u^{2}v^{2},uv\right\rangle \right)=\Gamma_{X}\left(\left\langle v^{-2}u^{-2},v^{-1}u^{-1}\right\rangle \right)$,
one can look at $\left\{ v^{-1},u^{-1}\right\} $ instead of $\left\{ u,v\right\} $
(this also does not affect assertion \emph{(2)}), and thus it is enough
to handle the cases where $\left|u\right|<\left|v\right|$.

Observe the graph $\Gamma=\vcenter{\xymatrix@1@R=2pt@C=15pt{ & \bullet\ar[dr]^u\\\otimes\ar[ru]^u &  & \circ\ar[dl]^{v}\\ & \bullet\ar[lu]^{v}}}$,
which obviously satisfies $\pi_{1}^{X}\left(\Gamma\right)=\left\langle u^{2}v^{2}\right\rangle $.
At the black vertices there can be no folding, as $u^{-1}\perp u$
follows from $u$ being CR, and likewise for $v$. In what follows
we will continue to mark by $\bullet$ vertices at which we already
know that no folding can occur, and by $\circ$ vertices at which
we do not know this.

Assume first that $u^{-1}$ is not a prefix of $v$, and $m$ is the
maximal common prefix of $u^{-1}$ and $v$. Writing $u=u'm^{-1}$
and $v=mv'$, $\Gamma$ folds into $\vcenter{\xymatrix@1@R=2pt@C=15pt{ & \bullet\ar[dr]^{u'}\\\otimes\ar[ru]^{u} &  & \bullet\ar[dl]^{v'} & \bullet\ar[l]_{\:m}\\ & \bullet\ar[lu]^{v}}}$.
After trimming one obtains $\Gamma'=\vcenter{\xymatrix@1@R=2pt@C=15pt{ & \bullet\ar[dr]^{u'}\\\otimes\ar[ru]^{u} &  & \bullet\ar[dl]^{v'}\\ & \bullet\ar[lu]^{v}}}$,
which satisfies the conditions of Lemma \ref{lem:no-trim}: $u^{-1}\perp u'$
since $u$ is CR and $u'$ is a prefix of $u$, and likewise for $v$
and $v'^{-1}$; $u'^{-1}\perp v'$ by the maximality of $m$. Therefore,
$\Gamma'$ folds into $\Gamma_{X}\left(\left\langle u^{2}v^{2}\right\rangle \right)$.
Since $uv$ appears in $\Gamma'$, and both $u_{1}$ and $\left(v^{-1}\right)_{1}$
leave its basepoint, the same holds after any foldings, and in particular
in $\Gamma_{X}\left(\left\langle u^{2}v^{2}\right\rangle \right)$.

Assume now that $u^{-1}$ is a prefix of $v$ and write $v=u^{-1}t$.
Now $\Gamma$ folds and trims into $\vcenter{\xymatrix@1@R=2pt@C=15pt{ & \circ\ar[dr]^{t}\\\otimes\ar[ru]^{u} &  & \bullet\\ & \bullet\ar[lu]^{t}\ar[ru]_{u}}}$,
as $u\perp t$ and $t^{-1}\perp u^{-1}$ follow from $v=u^{-1}t$
being CR. Let $m$ be the maximal common prefix of $u^{-1}$ and $tu^{-1}$,
and write $u=\overline{u}m^{-1}$, $tu^{-1}=mq$ (note that $\left|tu^{-1}\right|>\left|u^{-1}\right|\geq\left|m\right|$).
The last graph then folds and trims into $\Gamma'=\vcenter{\xymatrix@1@R=1.5pt@C=15pt{ & \bullet\ar[dd]_{q}\\\otimes\ar[ru]^{\overline{u}}\\ & \bullet\ar[lu]^{t}}}$
(with $\overline{u}$ possibly empty, in which case $\Gamma'=\xymatrix@1@C=18pt{\otimes\ar@/^{0.7pc}/[r]_{q}&\bullet\ar@/^{0.7pc}/[l]_{t}}$).
This graph satisfies the conditions of Lemma \ref{lem:no-trim}: $q^{-1}\perp t$
since $tu^{-1}=mq$ is CR, being a cyclic rotation of $v$, and if
$\overline{u}$ is not empty then $\overline{u}^{-1}\perp q$ by maximality
of $m$. Since $uv=t$ appears in $\Gamma'$ and $\left(t^{-1}\right)_{1}=\left(v^{-1}\right)_{1}$
leaves its basepoint, the same holds for $\Gamma_{X}\left(\left\langle u^{2}v^{2}\right\rangle \right)$,
which is obtained from it by foldings.\end{proof}
\begin{lem}
\label{lem:one-CR}If $\left\{ u,v\right\} $ is a basis of $\mathbf{F}_{2}$
with $u$ or $v$ CR then 
\begin{enumerate}
\item $uv$ appears in $\Gamma_{X}\left(\left\langle u^{2}v^{2}\right\rangle \right)$,
and
\item $u_{1}$ or $\left(v^{-1}\right)_{1}$ emanates from the basepoint
of $\Gamma_{X}\left(\left\langle u^{2}v^{2}\right\rangle \right)$.
\end{enumerate}
\end{lem}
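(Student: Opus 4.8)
My plan is to reduce this one-CR case to the already-settled both-CR case (Lemma~\ref{lem:both-CR}) by conjugating away the non-cyclically-reduced part. Using the identity $\Gamma_{X}\left(\left\langle u^{2}v^{2}\right\rangle \right)=\Gamma_{X}\left(\left\langle v^{-2}u^{-2}\right\rangle \right)$ together with the fact that ``appears'' is symmetric under inversion (if $g=p_{1}p_{2}$ appears, then $g^{-1}=p_{2}^{-1}p_{1}^{-1}$ appears), I may assume that $u$ is CR; if $v$ is CR as well the claim is exactly Lemma~\ref{lem:both-CR}, so assume $v$ is not CR and write $v=wv_{0}w^{-1}$ (reduced, $v_{0}$ CR, $w\neq1$). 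By Lemma~\ref{lem:general-bases}, $w$ is a prefix of a power of $u^{\pm1}$ and $\left\{ \tilde u,\tilde v\right\} :=\left\{ w^{-1}uw,\,v_{0}\right\} $ is a CR basis. The crucial algebraic identity is
\[
u^{2}v^{2}=u^{2}wv_{0}^{2}w^{-1}=w\,(w^{-1}uw)^{2}v_{0}^{2}\,w^{-1}=w\,\tilde u^{2}\tilde v^{2}\,w^{-1},
\]
so that $\left\langle u^{2}v^{2}\right\rangle =w\left\langle \tilde u^{2}\tilde v^{2}\right\rangle w^{-1}$ and, similarly, $uv=w\,\tilde u\tilde v\,w^{-1}$.

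I would then realize the core graph directly. Let $C:=\Gamma_{X}\left(\left\langle \tilde u^{2}\tilde v^{2}\right\rangle \right)$, whose structure is supplied by Lemma~\ref{lem:both-CR}, and form the graph $\Gamma$ by attaching a path labeled $w$ to the basepoint $\otimes_{C}$ of $C$ and taking the far end $\otimes'$ of this path as the new basepoint (as in the construction of Lemma~\ref{lem:none-CR}). Then $\pi_{1}^{X}\left(\Gamma\right)=w\left\langle \tilde u^{2}\tilde v^{2}\right\rangle w^{-1}=\left\langle u^{2}v^{2}\right\rangle $. The key claim, which I expect to be the crux, is that \emph{no folding occurs when $w$ is attached}, so that $\Gamma$ is already the core graph $\Gamma_{X}\left(\left\langle u^{2}v^{2}\right\rangle \right)$. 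Granting this, both assertions follow at once: since $\tilde u\tilde v$ appears in $C$ by Lemma~\ref{lem:both-CR}, bracketing that pair of paths by the $w$-stem exhibits $uv=w\left(\tilde u\tilde v\right)w^{-1}$ as an appearance at $\otimes'$, giving $(1)$; and the unique edge leaving $\otimes'$ reads $w_{1}$, which equals $\left(v^{-1}\right)_{1}$ because $v=wv_{0}w^{-1}$ is reduced, giving $(2)$.

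The heart of the matter is thus the no-folding claim at the gluing vertex $\otimes_{C}$, and here the reducedness of $v$ is exactly what is required (the stem is a reduced path, so no internal folds occur, and $C$ is already folded; only $\otimes_{C}$ needs checking). The stem contributes to $\otimes_{C}$ an incoming edge labeled $w_{\mathrm{last}}$. The loop of $C$ leaves $\otimes_{C}$ reading $\tilde u_{1}$ and returns reading $\left(v_{0}\right)_{\mathrm{last}}$. A fold against the returning edge would force $w_{\mathrm{last}}=\left(v_{0}\right)_{\mathrm{last}}$, which is precisely forbidden by the reducedness of the subword $v_{0}w^{-1}$ of $v$; a fold against the outgoing edge would force $\tilde u_{1}=w_{\mathrm{last}}^{-1}$, forbidden because $u$ is reduced (and $\tilde u$ is a cyclic rotation of $u$). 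An analogous inequality handles the case in which $C$ carries a stem of its own. I expect the bookkeeping to involve writing $w=u^{j}w_{0}$ with $w_{0}$ a proper prefix of $u$, so that $w_{\mathrm{last}}$ is recognized as a letter of $u$ and the two inequalities can be read off from the reduced forms of $u$ and $v$. The pleasant surprise is that, once the local picture at $\otimes_{C}$ is controlled, there is no winding or absorption to fear — the $w$-stem survives intact — which is what makes the one-CR case genuinely easier than it first appears.
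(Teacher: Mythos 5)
Your conjugation identity $u^{2}v^{2}=w\,\tilde{u}^{2}\tilde{v}^{2}\,w^{-1}$ and the fact that $\{\tilde{u},\tilde{v}\}=\{w^{-1}uw,v_{0}\}$ is a CR basis are both correct, but the crux of your argument --- that no folding occurs when the $w$-stem is attached to $C=\Gamma_{X}\left(\left\langle \tilde{u}^{2}\tilde{v}^{2}\right\rangle \right)$ --- is false in general, and it fails precisely in the hard case. Lemma \ref{lem:general-bases} only guarantees that $w$ is a prefix of a power of $u$ \emph{or of} $u^{-1}$; your verification of the two non-folding inequalities, and your later normal form $w=u^{j}w_{0}$ with $w_{0}$ a prefix of $u$, silently assume the former. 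If instead $w=u^{-k}w_{0}$ with $w_{0}$ a proper prefix of $u^{-1}$ and $u^{-1}=w_{0}z$, then $\tilde{u}=w^{-1}uw=\left(zw_{0}\right)^{-1}$, so $\tilde{u}_{1}=\left(w_{\mathrm{last}}\right)^{-1}$ and the last edge of the stem folds into the first edge of the cycle. Concretely, take $u=ab$ and $v=b^{-1}ab$ (a basis, since $\left(ab\right)\left(b^{-1}ab\right)^{-1}=b$); here $w=b^{-1}$, $v_{0}=a$, $\tilde{u}=ba$, $\tilde{v}=a$, and $C=\Gamma_{X}\left(\left\langle baba^{3}\right\rangle \right)$ is a $6$-cycle whose basepoint already carries an outgoing $b$-edge. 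Your stem contributes a second outgoing $b$-edge at that vertex, so the two must fold; indeed $u^{2}v^{2}=aba^{3}b$ is cyclically reduced, so the true core graph is a $6$-cycle with no stem at all, not your $7$-edge lollipop.

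The same sign problem independently breaks your appearance bookkeeping: in this case $w\tilde{u}$ is not reduced (in the example $w\tilde{u}=b^{-1}\cdot ba=a$), so $w\tilde{u}\tilde{v}w^{-1}$ is not the reduced expression of $uv$, and the concatenated paths $wp_{1}$ and $p_{2}w^{-1}$ do not exhibit an appearance in the sense of Definition \ref{def:appear}. This is not a repairable oversight so much as the actual content of the lemma: the paper disposes of the case where $w$ is a prefix of a \emph{positive} power of $u$ in a few lines (essentially your argument, and there your two inequalities do hold), and then spends six separate cases tracking how far the forced folding propagates into the graph when $w$ is a prefix of a power of $u^{-1}$, in order to check that a suitably cancelled form of $uv$ still appears afterwards. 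As it stands, your proposal proves only the easy half, together with the reduction to Lemma \ref{lem:both-CR} that the paper also performs.
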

\begin{proof}
If both $u$ and $v$ are CR then we are done by Lemma \ref{lem:both-CR}.
Again, by replacing $u$ and $v$ with $v^{-1}$ and $u^{-1}$ respectively
we can assume that $u$ is CR and $v$ is not (here $u$ is not necessarily
shorter). Writing $v=w\overline{v}w^{-1}$ with $\overline{v}$ CR
(and $w\neq1$), $w$ is a prefix of some power of $u$ or $u^{-1}$
by Lemma \ref{lem:general-bases}. The graph formed by a single $u^{2}v^{2}$-loop
folds and trims into $\Gamma=\vcenter{\xymatrix@1@R=2pt@C=15pt{ & \bullet\ar[r]_{u} & \circ\ar[dr]^{w}\\\otimes\ar[ru]^{u}\ar[dr]_{w} &  &  & \bullet\ar[dl]^{\overline{v}}\\ & \bullet & \bullet\ar[l]_{\overline{v}}}}$,
where at all the black vertices there can be no folding since $u$
and $\overline{v}$ are CR and $v$ is reduced. 

If $w$ is a prefix of a (positive) power of $u$ then at $\circ$
there is no folding as well since $u$ is CR, so that $\Gamma_{X}\left(\left\langle u^{2}v^{2}\right\rangle \right)$
is obtained from $\Gamma$ by foldings. Therefore to establish that
$uv$ appears in $\Gamma_{X}\left(\left\langle u^{2}v^{2}\right\rangle \right)$
it is enough to show that it appears in $\Gamma$. This is not obvious
in first sight, but it is true: $w$ is a prefix of a positive power
of $u$, so that $uw$ is a prefix of $u^{2}w$, hence $uv=uw\overline{v}w^{-1}$
indeed appears in $\Gamma$. Finally, both $u_{1}$ and $\left(v^{-1}\right)_{1}=w_{1}$
leave the basepoint of $\Gamma$ and thus also of $\Gamma_{X}\left(\left\langle u^{2}v^{2}\right\rangle \right)$.

We assume now that $w$ is a prefix of some power of $u^{-1}$, and
observe six cases. 

\textbf{\emph{Case}}\textbf{ }\textbf{\emph{$\boldsymbol{\left(i\right)}$}}:
$2\left|u\right|\leq\left|w\right|$, so that $w=u^{-2}\overline{w}$
with $\overline{w}$ possibly empty. In this case $\Gamma$ folds
and trims into $\Gamma'=\vcenter{\xymatrix@1@R=2pt@C=15pt{ & \bullet\ar[r]_{\overline{v}} & \bullet\ar[dr]^{\overline{v}}\\\otimes\ar[ru]^{\overline{w}} &  &  & \bullet\\ & \bullet\ar[ul]^{u} & \bullet\ar[ur]_{\overline{w}}\ar[l]_{u}}}$,
which satisfies Lemma \ref{lem:no-trim} (even if $\overline{w}=1$).
Now $uv=u^{-1}\overline{w}\overline{v}\overline{w}^{-1}u^{2}$ appears
in $\Gamma'$: $\overline{v}\overline{w}^{-1}u^{2}$ is a suffix of
$\Gamma'$ oriented clockwise, and $u^{-1}\overline{w}$ is a prefix
of $\Gamma'$ oriented counterclockwise since $\overline{w}$ is a
prefix of $u^{-1}\overline{w}$. In addition, $\left(v^{-1}\right)_{1}=w_{1}=\left(u^{-1}\right)_{1}$
leaves $\otimes$.

\textbf{\emph{Case $\boldsymbol{\left(ii\right)}$}}: $\left|u\right|<\left|w\right|<2\left|u\right|$,
so that we can write $u=qr$ and $w=r^{-1}q^{-1}r^{-1}$ with $q,r\neq1$.
Now $\Gamma$ folds and trims into $\Gamma'=\vcenter{\xymatrix@1@R=2pt@C=15pt{ & \circ\ar[r]_{\overline{v}} & \bullet\ar[dr]^{\overline{v}}\\\otimes\ar[ru]^{q} &  &  & \bullet\ar[dl]^{r}\\ & \bullet\ar[ul]^{r} & \bullet\ar[l]_{q}}}$,
and no folding can occur at the black vertices. If at $\circ$ there
is no folding as well, then $uv=r^{-1}\overline{v}rqr$ appears in
$\Gamma'$ and $\left(v^{-1}\right)_{1}=w_{1}=\left(r^{-1}\right)_{1}$
leaves $\otimes$, yielding the same for $\Gamma_{X}\left(\left\langle u^{2}v^{2}\right\rangle \right)$.
\\
Assume now that there is folding at $\circ$, so that $q^{-1}$ and
$\overline{v}^{2}$ have a common prefix. If this prefix is shorter
than $\left|\overline{v}\right|$ than the $\overline{v}rqr$ part
in $\Gamma'$ survives in $\Gamma_{X}\left(\left\langle u^{2}v^{2}\right\rangle \right)$,
and thus $uv=r^{-1}\overline{v}rqr$ still appears in it. Otherwise,
$\overline{v}$ is a prefix of $q^{-1}$, so that $r^{-1}\overline{v}$
is a prefix of $\Gamma'$ oriented CCW, and $rqr$ is a suffix of
$\Gamma'$ oriented CW, so that $uv$ appears already in the lower
half of $\Gamma'$. Finally, this half survives in $\Gamma_{X}\left(\left\langle u^{2}v^{2}\right\rangle \right)$
since $q^{-1}$ cannot overlap with $r$, since $u=qr$ is monotone
by Proposition \ref{prop:monotone}. In both cases $\left(v^{-1}\right)_{1}=\left(r^{-1}\right)_{1}$
still leaves $\otimes$.

\textbf{\emph{Case }}\emph{$\boldsymbol{\left(iii\right)}$}: $w=u^{-1}$.
The reasoning here is as in the previous case with $r=1$.

\medskip{}

In cases $\boldsymbol{\left(iv\right)-\left(vi\right)}$ $w$ is a
proper prefix of $u^{-1}$, and we write $u=qr$ and $w=r^{-1}$ (with
$r,q\neq1$). Here $\Gamma$ folds and trims into $\Gamma'=\vcenter{\xymatrix@1@R=2pt@C=15pt{
                   & \bullet\ar[r]_{q} & \circ\ar[dr]^{\overline{v}}\\
\bullet\ar[ru]^{r} &                            &                          & \bullet\ar[dl]^{\overline{v}}\\
                   & \otimes\ar[ul]^{q}         & \filleddiamond\ar[l]_{\:r}}}$, with folding possible only at $\circ$, and the folding cannot reach
past $\filleddiamond$ due to the monotonicity of $u=qr$. This already
shows that $\left(v^{-1}\right)_{1}=\left(r^{-1}\right)_{1}$ must
leave the basepoint of the final Stallings graph $\Gamma_{X}\left(\left\langle u^{2}v^{2}\right\rangle \right)$.
It is left to show that the folding and trimming at $\circ$ does
not prevent $uv=q\overline{v}r$ from appearing in $\Gamma_{X}\left(\left\langle u^{2}v^{2}\right\rangle \right)$.
If the lower half of $\Gamma'$ survives the folding and trimming
then $uv$ certainly appears in it. We assume therefore that there
is folding at $\circ$, and that it encompasses either all of $\overline{v}$
to the right of $\circ$ or all of $rq$ to the left of it (i.e.\ it
reaches the lower half of $\Gamma'$). 

\textbf{\emph{Case}}\textbf{ }\textbf{\emph{$\boldsymbol{\left(iv\right)}$}}:
$\left|\overline{v}\right|\leq\left|q\right|$. By our assumption,
$\overline{v}$ is a prefix of $q^{-1}$, so $q=\overline{q}\overline{v}^{-1}$
($\overline{q}$ may be empty). $\Gamma'$ then folds and trims into
$\Gamma''=\vcenter{\xymatrix@1@R=2pt@C=15pt{
                   & \bullet\ar[dl]_{\overline{v}}\ar[r]_{r} & \bullet\ar[dr]^{\overline{q}}\\
\bullet &                            &                          & \circ\ar[dl]^{\overline{v}}\\
                   & \otimes\ar[ul]^{\overline{q}}         & \filleddiamond\ar[l]_{\:r}}}$. Now $uv=q\overline{v}r=\overline{q}r$ and since the folding from
$\circ$ downward must stop at $\filleddiamond$ (or earlier), the
$\xymatrix@1@R=2pt@C=15pt{
\bullet & \otimes\ar[l]_{\:\overline{q}}         & \filleddiamond\ar[l]_{\:r}}$ part survives in $\Gamma_{X}\left(\left\langle u^{2}v^{2}\right\rangle \right)$
(since $\left|\overline{v}\right|<\left|\overline{q}^{-1}r^{-1}\overline{v}\right|$)
and we are done.

\textbf{\emph{Case}} \emph{$\boldsymbol{\left(v\right)}$}: $\left|q\right|<\left|\overline{v}\right|\leq\left|rq\right|$.
Now we can assume that $\overline{v}$ is a prefix of $q^{-1}r^{-1}$,
so that $r=st$ and $\overline{v}=q^{-1}t^{-1}$ (possibly with $s=1$).
In this case $uv=q\overline{v}r=t^{-1}r$ already appears in the $\xymatrix@1@R=2pt@C=15pt{\otimes         & \filleddiamond\ar[l]_{\:r}}$
part of $\Gamma'$, which always survives due to monotonicity.

\textbf{\emph{Case}}\emph{ $\boldsymbol{\left(vi\right)}$}: $\left|rq\right|<\left|\overline{v}\right|$.
Now we can assume that $q^{-1}r^{-1}$ is a prefix of $\overline{v}$.
Therefore, $uv=q\overline{v}r$ is a suffix of $\overline{v}r$, and
thus appears in the $\xymatrix@1@R=2pt@C=15pt{
\otimes         & \filleddiamond\ar[l]_{\:r} & \bullet\ar[l]_{\overline{v}}}$ part in $\Gamma'$. If $\overline{v}$ is not a prefix of $q^{-1}r^{-1}q^{-1}$
then the folding from $\circ$ downward stops before reaching this
part, and we are done. We thus add the assumption that $\overline{v}$
is a prefix of $q^{-1}r^{-1}q^{-1}$. Since $\left|rq\right|<\left|\overline{v}\right|$
we can write $q=st$ so that $\overline{v}=q^{-1}r^{-1}t^{-1}=t^{-1}s^{-1}r^{-1}t^{-1}$.
Now $\Gamma'$ folds and trims into $\Gamma''=\vcenter{\xymatrix@1@R=2pt@C=15pt{
                   & \bullet\ar[dl]_{t} & \bullet\ar[l]^{s}\\
\circ &                            &                          & \bullet\ar[ul]_{r}\\
                   & \otimes\ar[ul]^{s}         & \filleddiamond\ar[l]_{\:r}\ar[ur]_t}}$, and $uv=r^{-1}t^{-1}r$ appears in $\xymatrix@1@R=2pt@C=15pt{
\otimes         & \filleddiamond\ar[l]_{\:r}\ar[r]^t & \bullet}$, which survives any further folding since $\left|s\right|<\left|t^{-1}s^{-1}r^{-1}\right|$.
\end{proof}

\section{\label{sec:Epilogue}Epilogue}

While the original conjecture \cite[§5(1)]{MVW07} fails, it is plausible
that some modification of it holds. One possible option is the following:
\begin{conjecture}
\label{con:new-con}Let $H\leq J$ be subgroups of the free group
$\mathbf{F}$. Then $H\leq_{alg}J$ iff $H\covers J$ for every free
extension $\mathbf{F}'$ of $\mathbf{F}$, and every basis $X$ of
$\mathbf{F}'$.
\end{conjecture}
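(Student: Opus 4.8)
The plan is to treat the two implications separately; the forward one should be immediate, and the reverse one should reduce to a single graph-theoretic statement that I expect to be the crux. For the direction ``$H\leq_{alg}J\Rightarrow$ covering in every free extension'', the key observation is that the relation $H\leq_{alg}J$ is \emph{intrinsic}: its definition mentions only the pair $(H,J)$ and the free factors of $J$, never the ambient group. Hence, given any free extension $\mathbf{F}'$ of $\mathbf{F}$ and viewing $H\leq J\leq\mathbf{F}\leq\mathbf{F}'$, the relation $H\leq_{alg}J$ persists inside $\mathbf{F}'$. Applying the elementary fact recalled in the introduction --- that $H\leq_{alg}J$ forces $H\covers J$ with respect to every basis of the ambient group --- now with $\mathbf{F}'$ in place of $\mathbf{F}$ yields $H\covers J$ for every basis $X$ of $\mathbf{F}'$. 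Since $\mathbf{F}'$ and $X$ are arbitrary, this direction needs nothing beyond the fact that enlarging the ambient group does not affect $\leq_{alg}$.

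For the converse I would argue by contraposition, so suppose $J$ is not an algebraic extension of $H$ and seek a free extension $\mathbf{F}'$ and a basis $X$ with $H\not\covers J$ (for that $X$). First I would reduce to a question about a single free factor. By Takahasi's theorem there is an intermediate subgroup $H\leq_{alg}M\leq_{ff}J$, and since $J$ is not algebraic over $H$ this $M$ is a \emph{proper} free factor; writing $J=M*N$ with $N\neq1$ and splitting off one free generator $w$ of $N$ gives $J=P*\langle w\rangle$ with $P$ a proper free factor and $H\leq M\leq P$. For any basis the Stallings morphism $\Gamma_X(H)\to\Gamma_X(J)$ factors through $\Gamma_X(P)$, so surjectivity of the former forces surjectivity of $\Gamma_X(P)\to\Gamma_X(J)$; it therefore suffices to defeat the covering of $J$ by $P$. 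As $J=\langle P,w\rangle$ is a simple extension, Lemma~\ref{lem:covers-iff-appears} turns this into one clean assertion: \emph{if $P$ is a proper free factor of $J=P*\langle w\rangle$, then some basis $X$ of some free extension $\mathbf{F}'$ witnesses that $w$ does not appear in $\Gamma_X(P)$.}

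Proving this assertion is where the freedom to enlarge $\mathbf{F}$ becomes indispensable: inside $\mathbf{F}$ itself it can fail, precisely as Proposition~\ref{prop:counterexample} exhibits. The mechanism I would try to generalize is already visible there, with $P=\langle a^{2}b^{2}\rangle$ and $w=ab$: passing to $\mathbf{F}(a,b,c)$ and choosing the basis $X=\{a,\,c,\,c^{-1}bc\}$ rewrites $b$ as the non-cyclically-reduced word $c\,(c^{-1}bc)\,c^{-1}$, so the internal cancellation in $b^{2}$ contracts the cyclically reduced core of $\Gamma_X(\langle a^{2}b^{2}\rangle)$. The longest prefix and longest suffix of $w=ab$ that can be read at the basepoint then fail to cover $w$, leaving an uncovered middle letter, so $w$ does not appear (this is checked through the maximal-prefix/maximal-suffix analysis in the proof of Lemma~\ref{lem:covers-iff-appears}). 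In general I would introduce fresh generators and apply an automorphism of the enlarged group conjugating or ``spreading out'' the generators of $P$ by those new letters, lengthening $\Gamma_X(P)$ in a controlled fashion while keeping $w$ short relative to it, so as to force a genuine gap between the longest readable prefix and the longest readable suffix of $w$.

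The hard part will be making this construction uniform across all ranks of $P$. When $P$ is cyclic its cyclically reduced core is a single circle and the prefix/suffix bookkeeping is tractable, as above. For higher-rank $P$ the graph $\Gamma_X(P)$ branches, many competing readings of $w$ must be suppressed at once, and one must simultaneously guarantee that the fresh-letter modifications neither accidentally reproduce $w$ somewhere in $\Gamma_X(P)$ nor destroy the relation $J=P*\langle w\rangle$. Showing that a suitable free extension and basis can always be found --- equivalently, that any proper free factor can be \emph{exposed} as a failure of covering in some basis of some free extension --- is exactly the content that remains open, and is the core difficulty behind Conjecture~\ref{con:new-con}.
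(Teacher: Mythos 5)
This statement is a conjecture: the paper offers no proof of it, and neither do you, so the honest comparison is between two partial arguments. Your forward direction coincides with the paper's one-line observation that $\leq_{alg}$ is intrinsic to the pair $(H,J)$ and hence persists in any free extension, after which the standard fact that algebraic extensions are $X$-covered in every basis applies verbatim in $\mathbf{F}'$; that part is complete and correct. For the converse, your reduction is sound and actually goes further than anything recorded in the paper: using Takahasi's theorem to produce $H\leq_{alg}M\leq_{ff}J$ with $M$ proper, splitting $J=P*\langle w\rangle$ with $H\leq P$, and noting that $\Gamma_X(H)\to\Gamma_X(J)$ factors through $\Gamma_X(P)$, you correctly reduce the whole conjecture (via Lemma \ref{lem:covers-iff-appears}) to the single assertion that a proper free factor $P$ of $J=P*\langle w\rangle$ admits some free extension and basis in which $w$ fails to appear in $\Gamma_X(P)$. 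Your worked instance with the basis $\{a,c,c^{-1}bc\}$ of $\mathbf{F}(a,b,c)$ is a correct variant of the paper's own check with $\{a,cb^{-1},cbc^{-1}\}$, which is all the paper does: it verifies only that the Section \ref{sec:The-counterexample} example ceases to be a counterexample. The remaining assertion --- exposing an arbitrary proper free factor, uniformly over the rank and branching of $\Gamma_X(P)$ --- is precisely the open content of the conjecture, and you rightly flag it as such rather than claiming it; just be aware that what you have is a clean reformulation plus one example, not a proof.
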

Since the relation $H\leq_{alg}J$ does not depend on the ambient
group, one direction holds as before. But in contrast with the original
conjecture, the example in Section \ref{sec:The-counterexample} is
no longer a counterexample: let $\mathbf{F}=\mathbf{F}\left(a,b\right)$,
$H=\left\langle a^{2}b^{2}\right\rangle $ and $J=\left\langle ab,a^{2}b^{2}\right\rangle $.
For $\mathbf{F}'=\mathbf{F}\left(a,b,c\right)$ and $X=\left\{ a,cb^{-1},cbc^{-1}\right\} $,
$H$ does not $X$-cover $J$: denote $x=a$, $y=cb^{-1}$ and $z=cbc^{-1}$.
Then written in this basis, $H=\left\langle x^{2}y^{-1}z^{2}y\right\rangle $
and $J=\left\langle x^{2}y^{-1}z^{2}y,xy^{-1}zy\right\rangle $. By
Lemma \ref{lem:covers-iff-appears}, $H\covers J$ iff $xy^{-1}zy$
appears in $\Gamma_{\left\{ x,y,z\right\} }\left(\left\langle x^{2}y^{-1}z^{2}y\right\rangle \right)$,
which is not the case.

Another plausible option is that the original conjecture from \cite[§5(1)]{MVW07}
holds for free groups of rank three or more, as it is clear that the
counterexample exploits many idiosyncrasies of $\mathbf{F}_{2}$.
If this is true, then Conjecture \ref{con:new-con} follows as well.

\bibliographystyle{amsalpha}
\bibliography{PrimitiveBib}

\end{document}